  \newif\ifhrule\hrulefalse
  \CheckCommand*\refstepcounter[1]{\stepcounter{#1}%
      \protected@edef\@currentlabel
       {\csname p@#1\endcsname\csname the#1\endcsname}%
  }
  \renewcommand*\refstepcounter[1]{\stepcounter{#1}%
    \protected@edef\@currentlabel
      {\csname p@#1\expandafter\endcsname\csname the#1\endcsname}%
  }
  \def\labelformat#1{\expandafter\def\csname p@#1\endcsname##1}
  \DeclareRobustCommand\Ref[1]{\protected@edef\@tempa{\ref{#1}}%
     \expandafter\MakeUppercase\@tempa
  }
  \newcommand{\numberlike}[2]{%
     \expandafter\def\csname c@#1\endcsname{%
         \expandafter\csname c@#2\endcsname}%
  }
  \def\DefaultNumberTheoremWithin{section}
  \theoremstyle{plain}
  \newtheorem{Lemma}{Lemma}
     \numberwithin{Lemma}{\DefaultNumberTheoremWithin}
     \numberwithin{Claim}{\DefaultNumberTheoremWithin}
  \newtheorem{Theorem}{Theorem}
     \numberwithin{Theorem}{\DefaultNumberTheoremWithin}
  \newtheorem{Corollary}{Corollary}
     \numberwithin{Corollary}{\DefaultNumberTheoremWithin}
  \newtheorem{Proposition}{Proposition}
     \numberwithin{Proposition}{\DefaultNumberTheoremWithin}
     \numberwithin{Conjecture}{\DefaultNumberTheoremWithin}
  \theoremstyle{definition}
     \numberwithin{Definition}{\DefaultNumberTheoremWithin}
  \theoremstyle{definition}
     \numberwithin{Question}{\DefaultNumberTheoremWithin}
  \theoremstyle{definition}
     \numberwithin{Problem}{\DefaultNumberTheoremWithin}
  \theoremstyle{remark}
  \newtheorem{Remark}{Remark}
     \numberwithin{Remark}{\DefaultNumberTheoremWithin}
  \theoremstyle{remark}
  \newtheorem{Example}{Example}
     \numberwithin{Example}{\DefaultNumberTheoremWithin}
     \numberwithin{Case}{Lemma}
     \numberwithin{Step}{Lemma}
  \def\eqref{\ref}
  \newcommand{\mb}{\mathbb}
  \newcommand{\mc}{\mathcal}
  \newcommand{\aA}{\mathcal A}
  \newcommand{\mf}{\mathfrak}
  \def\binomial(#1,#2){{#1\choose #2}}
  \def\delplus(#1,#2,#3){\ensuremath{\pi}_{#1,#2,#3}}
  \def\delminus(#1,#2){\ensuremath{\operatorname{\partial}}_{#1,#2}}
  \newcommand{\Model}{\mathcal{M}}
  \def\redword(#1){{\ensuremath{\tilde{#1}}}}
  \newcommand{\LLL}{\mc{L}}
  \newcommand{\OOO}{\mc{O}}
  \newcommand{\ZZ}{\mb{Z}}
  \newcommand{\NN}{\mb{N}}
  \newcommand{\PP}{\mb{P}}
  \newcommand{\RR}{\mb{R}}
  \newcommand{\CC}{\mb{C}}
  \newcommand{\KK}{\mb{K}}
  \newcommand{\xx}{\mathbf{x}}
  \newcommand{\symm}{\mf{S}}
  \newcommand{\Bottom}{ascending~}
  \newcommand{\bott}{\mathrm{asc}}
  \newcommand{\BottomPol}{\ensuremath{\operatorname{As}}}
  \newcommand{\Csiszar}{Csisz\'ar~}
  \newcommand{\csi}{\mathrm{csi}}
  \newcommand{\CsiszarPol}{\ensuremath{\operatorname{Ci}}}
  \newcommand{\Inversion}{inversion~}
  \newcommand{\inv}{\mathrm{inv}}
  \newcommand{\Birkhoff}{Birkhoff~}
  \newcommand{\birk}{\mathrm{birk}}
  \newcommand{\BirkhoffPol}{\ensuremath{\operatorname{Bi}}}
  \newcommand{\PlackettLuce}{Plackett-Luce~}
  \newcommand{\PlackettLuceShort}{\mathrm{PL}}
  \newcommand{\pl}{\mathrm{PL}}
  \newcommand{\BradleyTerry}{Bradley-Terry~}
  \newcommand{\bt}{\mathrm{BT}}
  \newcommand{\GeneralPoset}{Q}
  \newcommand{\PartialRanking}{\mathcal{P}}
  \newcommand{\LinExt}{\LLL}
  \newcommand{\GroundSet}{\Omega}
  \newcommand{\MaxChain}{\mathrm{M}}
  \newcommand{\Cov}{\mathrm{Cov}}
  \newcommand{\MinCov}{\mathrm{MinCov}}
  \newcommand{\Order}{\OOO}
  \newcommand{\rk}{\mathrm{rk}}
  \newcommand{\object}{object~}
  \newenvironment{note}[1][Note]
   {\bigskip\begin{center}\begin{boxedminipage}{4.5in}\setlength{\parindent}{1em}\noindent\textbf{#1. }}
   {\end{boxedminipage}\end{center}\bigskip}
\begin{document}

  \title{Commutative Algebra of Statistical Ranking}

  \author{Bernd Sturmfels}
     \address{              Department of Mathematics \\
              University of California \\
              Berkeley, CA 94720 \\
              USA}
     \email{bernd@math.berkeley.edu}

  \author{Volkmar Welker}
     \address{Fachbereich Mathematik und Informatik\\
              Philipps-Universit\"at \\
              35032 Marburg\\
              Germany}
     \email{welker@mathematik.uni-marburg.de}

  \begin{abstract}
    A model for statistical ranking is a family of probability
    distributions whose states are orderings of a fixed finite set of items.
    We represent the orderings as
        maximal chains in a graded poset. The most widely used
            ranking models are parameterized by 
     rational function in the model
     parameters, so they define algebraic varieties.
     We study    these varieties from the perspective
    of combinatorial commutative algebra. 
    One of our models, the Plackett-Luce model, is non-toric.
    Five others are toric:
    the Birkhoff model, the
    ascending model, the Csisz\'ar model,
    the inversion model, and the Bradley-Terry model.
    For these models we examine the toric algebra,
    its lattice polytope, and its Markov basis.
  \end{abstract}

  \maketitle
  
  \markboth{Bernd Sturmfels and Volkmar Welker}{
  Commutative Algebra of Statistical Ranking}
  
 \section{Introduction}
 
  A statistical model for ranked data is a family $\Model$ of probability distribution on the
  symmetric group $\symm_n$. Each distribution $p(\theta)$ in 
  $\Model$ depends on some model
  parameters $\theta$ and it associates a probability $p_\pi(\theta)$ to each
  permutation $\pi$ of $[n] = \{1,2,\ldots,n\}$.
  Thus the model $\Model$ is a parametrized subset of the 
  $(n!-1)$-dimensional standard simplex $\Delta_{\symm_n}$.

  In algebraic statistics, one assumes that the probabilities $p_\pi(\theta)$
  are rational functions in the model parameters $\theta$, so that $\Model$
  is a semi-algebraic set in $ \Delta_{\symm_n}$, and one aims to
  characterize the prime ideal $I_\Model$ of polynomials that vanish on $\Model$.
  In fact, one of the origins of the field was the
  {\em spectral analysis for permutation data} described by
  Diaconis and Sturmfels in \cite[\S 6.1]{DiaconisSturmfels1998}.
  The corresponding {\em Birkhoff model} $\Model$
     is the toric variety of the Birkhoff polytope. This polytope consists of all
  bistochastic matrices and it is the convex hull of all $n {\times} n$ permutation matrices.
  There has been a considerable amount of research on  the
  geometric invariants  of the Birkhoff model $\Model$.
  The simplest such invariant is its dimension, ${\rm dim}(\Model) = (n-1)^2$.
  The {\em degree} of $\Model$ is the
  normalized volume of the Birkhoff polytope, a topic of independent interest
  in  combinatorics \cite{CanfieldMcKay2009}.
  Diaconis and Eriksson \cite{DiaconisEriksson2006} conjectured that the Markov basis of the Birkhoff 
  model consists of binomials of degree~$\leq 3$.
  
  Besides the Birkhoff model, there are many other models for ranked data that 
  are both relevant for statistical analysis and have an interesting algebraic structure.
  It is the objective of this article to conduct a comparative study of such models
  from the perspectives of commutative algebra and geometric combinatorics.
  Both toric models and non-toric models are of interest. The former include the models introduced by
  Csisz\'ar \cite{Csiszar2009a,Csiszar2009b}, and the latter include
  the Plackett-Luce model \cite{ChengDembczynskiHuellermeier2010, Luce1959, Plackett1968} and
  the generalized \BradleyTerry models  \cite{Hunter2004}.

  \smallskip

  The organization of this paper is as follows.
  In Section 2 we give an informal introduction to all our models.
  We write out formulas for the probabilities
  for the six permutations of  $n = 3$ items, and we discuss the subsets they
  parametrize in
  the $5$-dimensional simplex $\Delta_{\symm_3}$.
  Precise formal definitions for the four toric models are given in Section 3.
  We represent the states as maximal chains in a graded poset $Q$.
  Typically, $Q$ is the distributive lattice induced by some order 
  constraints on the $n$ items to be ranked. If there are no such constraints then
  $Q = 2^{[n]}$ is the Boolean lattice whose maximal chains
  are all $n !$ permutations in $\symm_n$.  
  Non-trivial order constraints arise frequently in applications   of ranking models, for instance in
  computational biology \cite{BeerenwinkelErikssonSturmfels2006}
  and machine learning \cite{ChengDembczynskiHuellermeier2010}.
  Our algebraic framework based on graded posets $Q$ 
  is well-suited for such contemporary  applications of  statistical ranking.

  While the Birkhoff model has already received a lot of attention in the literature,
  we here focus on the {\em Csisz\'ar model} (Section 4), the
  {\em \Bottom model} (Section 5) and the
  {\em \Inversion model} (Section 6). For each of these toric
  varieties, we characterize the corresponding lattice polytope
  and its Markov bases, that is, binomials that generate the toric ideal.
 
  Section 7 is concerned with the {\em Plackett-Luce model}, which is not
  a toric model, but is parametrized by
  certain conditional probabilities that are not monomials. 
  In algebraic geometry language, this model is obtained 
  by blowing up the projective space
  $\PP^{n-1}$ along a  family of linear subspaces of codimension $2$,
  and we study its
  coordinate ring.
  We also examine marginalizations of our models,  including
  the widely used {\em Bradley-Terry model}.
  
 \section{Toric Models: A Sneak Preview}
  \label{sec:toricfirst}
  
  A toric model for complete permutation data is specified by a non-negative
  integer matrix $A$ with $n!$ columns that all have the same sum $S$.
  These column vectors $A_\pi$ are indexed by permutations $\pi \in \symm_n$ and they
  represent the {\em sufficient statistics} of the model.
  The article \cite{GeigerMeekSturmfels2006} serves as our general
  reference for toric models in statistics, their relationship with exponential families,
  and the role of the matrix $A$. For an introduction to algebraic statistics
  in general, and for further reading on toric models,
  we refer to the books \cite{DrtonSturmfelsSullivant2009, PachterSturmfels2005}.

  If $r = {\rm rank}(A)$ then the convex hull of the column vectors 
  $A_\pi$ is a lattice polytope of dimension $r-1$. 
  We refer to it as the {\em model polytope}.   The toric model  
  can be identified with the  non-negative points on the
  projective toric variety associated with the model polytope.
       Each  data set is summarized as
  a function $u : \symm_n \mapsto \NN$,
  where $u(\pi)$ is the number of times the permutation $\pi$ has been observed.
  Thinking of $u$ as a column vector, we can form the matrix-vector product $A u$,
  whose entries are the sufficient statistics of the data $u$.
  Then the sum $n! \cdot S$ of the entries in the vector $Au$ coincides with 
  the sample size $\,N = \sum_{\pi \in \symm_n} u(\pi)$.

  In subsequent sections we will generalize to the situation 
  where $\symm_n$ is replaced by a proper subset,
  in which case $A$ has fewer than $n!$ columns, but
  still labeled by permutations. These will be the 
  linear extensions of a given partial order on $[n] = \{1,2,\ldots,n\}$.
  In fact, for some models we can even take the
  set of maximal chains in an arbitrary ranked poset. But for a first look we confine
  ourselves to the situation described above, where $A$ has $n !$ columns.
  
  We now define four toric models for probability distributions on $\symm_n$. 
  We do this by way of
  a verbal description of the sufficient statistics in each model.
  These sufficient statistics are   numerical functions on
  the permutations $\pi$ of the given set $[n]$ of items to be ranked.
  
  \begin{itemize}
    \item[(a)] In the {\em \Bottom model}, the sufficient statistics $Au$ record,
      for each subset $I \subset [n]$, the number of samples $\pi$ in the data $u$
      that have the set $I$ at the bottom. Here, the set
      $I$ being {\em at the bottom} means that
            ($i \in I$ and $j \not\in I$) implies $\pi(i) < \pi(j)$.
    \item[(b)] In the {\em \Csiszar model}, the sufficient statistics $Au$ count,
      for each $i \in I \subset [n]$, the number of samples
      that have $I$ at the bottom but with $i$ as winner in the group~$I$.
      This is the model studied by Vill\~ o \Csiszar  \cite{Csiszar2009a, Csiszar2009b}
       under the name ``L-decomposable''.
    \item[(c)] In the {\em \Birkhoff model} of \cite[\S 6.1]{DiaconisSturmfels1998},
      the sufficient statistics $Au$ of a data set $u$ record, for each $i,j \in [n]$,
      the number of samples $\pi$ in which object $i$ is ranked in place $j$,
    \item[(d)] In the {\em \Inversion model}, 
      the sufficient statistics $Au$ count, for each ordered pair $i<j$ in $ [n]$,
      the number of samples $\pi$ in which that pair is 
      an inversion, meaning $\pi^ {-1}(i) > \pi^{-1}(j)$.
      This model can be seen as a multivariate version of the {\em Mallows model} 
      \cite{Marden1995}.
    
  \end{itemize}
  
  \smallskip

  To illustrate the differences between these models let us consider the simplest case $n=3$.
  In each case the toric ideal of the model is the kernel 
  of a square-free monomial map from the polynomial ring
  $\KK[p_{123},p_{132},p_{213}, p_{231},p_{312},p_{321}]$ representing the probabilities
  to another polynomial ring $\KK[a,b, \ldots\,] $ that represents the model parameters.
  The model polytope is the convex hull of the six $0$-$1$ vectors 
  corresponding to the square-free monomials:
  $$ 
    \begin{matrix}
                     &  p_{123}  & p_{132} & p_{213} & p_{231} & p_{312} & p_{321} \\
      {\mbox{\Birkhoff}} &  a_{11} a_{22} a_{33} &  a_{11} a_{23} a_{32} & a_{12} a_{21} a_{33} & 
                        a_{12} a_{23} a_{31} & a_{13} a_{21} a_{32} & a_{13} a_{22} a_{31} \\
      {\mbox{\Inversion}} & b_{12} b_{13} b_{23} &
                        b_{12} b_{13} q_{23} & q_{12} b_{13} b_{23} &
                        q_{12} q_{13} b_{23} &b_{12} q_{13} q_{23} & q_{12} q_{13} q_{23} \\
      {\mbox{\Bottom}} & c_1 c_{12} c_{123} & c_1 c_{13} c_{123} & c_2  c_{12} c_{123} & c_2  c_{23} c_{123} 
                  & c_3 c_{13} c_{123} & c_3 c_{23} c_{123} \\ 
      {\mbox{\Csiszar}} & 
      d_{|1} d_{1|2} d_{12|3} &
      d_{|1} d_{1|3} d_{13|2} &
      d_{|2} d_{2|1} d_{12|3} &
      d_{|2} d_{2|3} d_{23|1} &
      d_{|3} d_{3|1} d_{13|2} &
      d_{|3} d_{3|2} d_{23|1} \\
    \end{matrix}
  $$
  The toric ideals record the algebraic relations among these square-free monomials:
  $$
    \begin{matrix}
      I_{\inv}  & = & \langle \,
          p_{132}p_{231} - p_{123} p_{321} , \,
          p_{213} p_{312} - p_{123} p_{321} \,
          \rangle &
          & \text{has codimension $2$,} \\
           I_{\rm \birk}   = I_{\bott} & = & \langle \, p_{123} p_{231} p_{312} - p_{132} p_{213} p_{321} \,  \rangle &
          & \text{has codimension $1$,} \\
      I_{\csi} & = & \langle \,\, 0 \,\, \rangle &
          & \text{has codimension $0$.} \\
    \end{matrix}
  $$
  
  For each model, the matrix $A$ has six columns, indexed by
  $\symm_3$, and its rows are labeled by the model parameters.
  For example, for the ascending model, the matrix has seven rows:
  $$  
    \BottomPol_3 \quad = \quad
    \bordermatrix{
          & \,p_{123} & p_{132} & p_{213} & p_{231} & p_{312} & p_{321} \cr
      c_1 &         1 &       1 &       0 &       0 &       0 &       0 \cr
      c_2 &         0 &       0 &       1 &       1 &       0 &       0 \cr
      c_3 &         0 &       0 &       0 &       0 &       1 &       1 \cr
   c_{12} &         1 &       0 &       1 &       0 &       0 &       0 \cr
   c_{13} &         0 &       1 &       0 &       0 &       1 &       0 \cr
   c_{23} &         0 &       0 &       0 &       1 &       0 &       1 \cr
   c_{123}&         1 &       1 &       1 &       1 &       1 &       1
    } 
  $$
  Here we use the same notation for both the
  matrix and the model polytope, which is the convex hull
  of the columns. From the equality of ideals,
  $I_{\birk} = I_{\bott}$, we infer that the polytope  $\BottomPol_3$ is affinely isomorphic to
  the $3 {\times} 3$-\Birkhoff polytope, which is a
  cyclic $4$-polytope with six vertices.
  The ideal $I_\inv$ reveals that
  the model polytope for the \Inversion model is a {\em regular octahedron},
  while the polytope for the \Csiszar model is the full {\em $5$-simplex}.

  To see that no two of our four models agree,
  we need to go to $n \geq 4$. 
   
  \begin{Example}
     \label{ex:strict}
    Let  $n=4$. 
    Then all four model polytopes have $24$ vertices but their dimensions 
    are different. 
    The \Birkhoff model has dimension  $9$, the inversion model has dimension $6$, 
    the ascending model has dimension $11$, and     the  
    Csisz\' ar model has dimension $17$.
     \ref{thm:inclusions} will explain 
  the precise relationships and inclusions among the four models. \qed
   \end{Example}

  Our work on this project started by trying to understand a
  certain model whose toric closure  is the ascending model.
  Here {\em toric closure} refers to the smallest
  toric model containing a given model. That non-toric model
  for ranking  is  the {\em Plackett-Luce model}
  \cite{ChengDembczynskiHuellermeier2010, Luce1959, Plackett1968}.
  It can be obtained from
  the ascending model by the following specialization of parameters:
  $$ 
     c_i \mapsto \frac{1}{\theta_i}, \,\,
     c_{ij} \mapsto \frac{1}{\theta_i + \theta_j},\,\,
     c_{ijk} \mapsto \frac{1}{\theta_i + \theta_j + \theta_k} ,\,\ldots . 
  $$
  The prime ideal of algebraic relations among the $p_\pi$ is
  a non-toric ideal which contains the toric ideal $I_\bott$.
  The case $n=3$ is worked out explicitly in \ref{exPL3}.
  Geometrically, that smallest Plackett-Luce model 
  corresponds to blowing up $\,\PP^2\,$ at the nine points in
   \ref{eq:ninepoints}.

 \section{Toric Models: Definitions and General Results}

  Let $\GeneralPoset$ be a poset on finite ground set $\GroundSet$. 
  A {\em $\GeneralPoset$-ranking} is a
  maximal chain $a_0 < \cdots < a_n$ in $\GeneralPoset$.
  A chain $a_0 < \cdots < a_n$ being {\em maximal}
    means that  $a_0$ is minimal in $\GeneralPoset$, $a_n$ is maximal,
  and $a_i < a_{i+1}$ is a cover relation for $0 \leq i \leq n-1$.
  We write $\MaxChain(\GeneralPoset)$ for the set of maximal chains in $\GeneralPoset$ and
  $\Cov(\GeneralPoset)$ for the set of cover relations in $\GeneralPoset$.
  If $\GeneralPoset = 2^{[n]}$ is the Boolean lattice of all subsets of $[n]$ ordered by
  inclusion then the maximal chains in $\GeneralPoset$ are in bijection with the 
  permutations in $\symm_n$, and the models below coincide with the
  ones described in Section~2.

We shall define four toric models whose
states are the maximal chains 
$ \pi \in \MaxChain(\GeneralPoset)$.
The probability of $\pi$ is represented by an indeterminate $p_\pi$.
   Each toric model for $Q$-rankings is
  defined by a non-negative integer matrix $A$ whose columns are
  indexed by  $\MaxChain(\GeneralPoset)$  and have a fixed coordinate sum $S$.
  The matrix $A$ represents  a monomial map
  from the polynomial ring $\KK[\,p\,]$ in the unknowns $p_\pi$, $\pi \in \MaxChain(\GeneralPoset)$,
  to a suitably chosen second polynomial ring.
    
  Any data set gives a function $u :    \MaxChain(\GeneralPoset) \mapsto \NN$,
  where $u(\pi)$ is the number of times the permutation $\pi$ has been observed.
  Thinking of $u$ as a column vector, we can form the matrix-vector product $A u$,
  whose entries are the sufficient statistics of the data set $u$.
  The coordinate sum of the vector $Au$ is equal to $S$ times
  the sample size $\,N = \sum_{\pi \in \MaxChain(\GeneralPoset)}  u(\pi)$.

  \begin{itemize}
    \item[(a)] In the {\em \Bottom model}, the sufficient statistic $Au$ 
	  records, for any given poset element $a \in \GeneralPoset$, the number of
	  observed maximal chains 	$\pi$ that pass though $a$.
	  The model parameters are represented by unknowns $c_{a}$,
	  and the monomial map is
	  $$ \qquad p_\pi \,\mapsto \,c_{a_0} c_{a_1} \cdots c_{a_n} \qquad \hbox{for} \,\,\,
	  \pi = (a_0 {<} a_1 {<} \cdots {<} a_n). $$
    \item[(b)] In the {\em \Csiszar model,} the sufficient statistic $Au$ 
	  records, for any cover $a < b$, the number of
	  observed maximal chains 	$\pi$ passing though $a$ and $ b$.
	  The model parameters are represented by unknowns $\,d_{a<b}\,$
	  for $(a{<}b) \in \Cov(\GeneralPoset)$. The  monomial map~is
	  $$ \qquad p_\pi \,\mapsto\, d_{a_0 < a_1} d_{a_1 < a_2} \cdots d_{a_{n-1} < a_n}
	  \qquad \hbox{for} \,\,\,
	  \pi = (a_0 {<} a_1 {<} \cdots {<} a_n). $$
  \end{itemize}

 If $\GeneralPoset = 2^{[n]}$, the Boolean lattice of 
  subsets $a \subseteq [n]$,
  then the  maximal chains $\pi$ in $Q$ are
  identified with permutations in $\symm_n$, and 
    we recover the \Bottom model as defined in Section 2.
Likewise we recover the  \Csiszar model on $\symm_n$ by
setting $d_{a<b}=
  d_{a|i}$ for $\{i\} = b \backslash a$.

  The \Birkhoff and \Inversion model cannot be formulated in the above generality.
  For these we need assume that the poset  $\GeneralPoset$ is a {\em distributive lattice}.
  This means that   $\GeneralPoset = \Order(\PartialRanking)$ 
  is the poset of order ideals in a given partial order $\PartialRanking$ on 
  $[n]$. We refer to $\PartialRanking$ as the {\em constraint poset}. 
  The constraint $i {<} j$ stipulates  that item $i$ must always
  be ranked before item $j$. The maximal chains $\pi$ in
  $\GeneralPoset = \Order(\PartialRanking)$ are the
  permutations of $[n]$ that respect all constraints  in $\PartialRanking$.
  See   \cite{BeerenwinkelErikssonSturmfels2006}
  for an introduction to distributive lattices in a context  of statistical interest.
    
The  compatible permutations $\pi$ are known as {\em linear extensions} of $\PartialRanking$.
    From now on we abbreviate $\LinExt(\PartialRanking) = \MaxChain(\Order(\PartialRanking))$,
    and we identify  elements of $\LinExt(\PartialRanking)$
   with permutations $\pi \in \symm_n$ that represent
  linear extensions of $\PartialRanking$. 
   This allows us to define our other two toric models:

  \begin{itemize}
    \item[(c)] 
      In the {\em \Birkhoff model},
	  the sufficient statistic $Au$ 
	  records, for all $i,j \in [n]$, the number
	  of samples $\pi \in \LinExt(\PartialRanking)$ for which \object $j$ is ranked 
	  in position $i$.  The model parameters are represented by unknowns
	  $a_{ij}$ for  $i,j \in [n]$. The monomial map~is
          $$ \qquad p_\pi \,\mapsto\, a_{1\pi(1)} a_{2\pi(1)} \cdots a_{n \pi(n)} 
             \qquad \hbox{for} \,\,\,\pi \in \LinExt(\PartialRanking). $$
    \item[(d)] 
	  In the {\em \Inversion model},
	  the sufficient statistics $Au$ 
      records, for each ordered pair $i,j $ in $[n]$,
      the number of samples $\pi \in \LinExt(\PartialRanking)$
      for which $i<j$ but      $\pi^{-1}(i) > \pi^{-1}(j)$.
      The model parameters are represented by unknowns
	  $u_{ij}$ and $v_{ij}$.	  The monomial map~is
	  $$ \qquad p_\pi \,\ \mapsto \prod_{{1 \leq i < j \leq n} \atop {\pi^{-1}(i) < \pi^{-1}(j)}} 
	  \!\! u_{ij}	  \prod_{{1 \leq i < j \leq n} \atop {\pi^{-1}(i) > \pi^{-1}(j)}} \!\! v_{ij}
\qquad \hbox{for} \,\,\,\pi \in \LinExt(\PartialRanking). $$
     \end{itemize}

  In general, we have the following inclusions among the four toric models (a)-(d).
  These inclusions of toric varieties correspond to linear projections among the
  model polytopes.

  \begin{Theorem} \label{thm:inclusions}
      \begin{itemize}
	  \item[(i)] The   \Bottom model and 
        the  \Csiszar model 
        on a poset $\GeneralPoset$ satisfy
        $$\Model_{\bott} \subseteq \Model_{\csi},$$
        provided $Q$ has either a unique minimal element $\hat 0$
        or a unique maximal element $\hat 1$.
   	  \item[(ii)] If $Q = \mathcal{O}(\PartialRanking)$ is a distributive lattice,
	    then the \Birkhoff model $\Model_{\birk}$,
        the \Inversion model  $\Model_{\inv}$,
        the \Bottom model $\Model_{\bott}$ and
        the  \Csiszar model $\Model_{\csi}\,$ satisfy
        $$   \Model_{\inv} \subseteq \Model_{\csi}
             \,\, \,\,\text{and} \,\,\,\,
             \Model_{\birk} \subseteq 
             \Model_{\bott} \subseteq 
             \Model_{\csi}.        $$
      \item[(iii)] The inclusions (ii) are strict in general. Moreover, if
      $n \geq 4$ and	    $\GeneralPoset = 2^{[n]}$ then
        $$           \Model_{\inv} \not\subset \Model_{\bott}
\,           \,\,\, \text{and} \,\,\, \,          \Model_{\birk} \not\subset \Model_{\inv}.        $$
    \end{itemize}
  \end{Theorem}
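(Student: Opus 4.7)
The plan is to prove each of the three parts via the standard toric criterion: an inclusion $\Model_A \subseteq \Model_B$ of two toric models with integer matrices $A$ and $B$ holds if and only if $\ker_{\ZZ} B \subseteq \ker_{\ZZ} A$, equivalently, if and only if every row of $A$ lies in the $\QQ$-row span of the rows of $B$. In practice, I will either exhibit a monomial substitution of the ``larger'' parameters in terms of the ``smaller'' ones so that the two parametrizations coincide, or write down an explicit rational row identity.

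For part (i), assume $Q$ has a unique minimum $\hat 0$ and specialize the Csiszar parameters by $d_{\hat 0 < b} := c_{\hat 0}\, c_b$ and $d_{a<b} := c_b$ for $a \neq \hat 0$. Along a maximal chain $\pi = (\hat 0 = a_0 < a_1 < \cdots < a_n)$ the Csiszar monomial telescopes to $c_{\hat 0} c_{a_1} \cdots c_{a_n}$, which is the Bottom monomial; the case of a unique $\hat 1$ is symmetric. For part (ii), the containment $\Model_{\bott} \subseteq \Model_{\csi}$ follows from (i) since $\Order(\PartialRanking)$ has both $\hat 0 = \emptyset$ and $\hat 1 = \PartialRanking$. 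For $\Model_{\inv} \subseteq \Model_{\csi}$, I propose the substitution
$$
d_{a < a \cup \{k\}} \;:=\; \prod_{i \in a,\, i<k} u_{ik} \;\prod_{j \in a,\, j>k} v_{kj},
$$
and then check that the Csiszar product along $\pi$ accumulates each pair $(i,j)$ with $i<j$ exactly once, namely at the cover where the later of $i,j$ is inserted; the resulting factor is $u_{ij}$ or $v_{ij}$ according as $(i,j)$ is a non-inversion or an inversion of $\pi$. For $\Model_{\birk} \subseteq \Model_{\bott}$, I use the $\QQ$-linear row identity
$$
a_{ij} \;=\; \sum_{|a|=i,\; j \in a} c_a \;-\; \sum_{|a|=i-1,\; j \in a} c_a,
$$
which records that $\pi(i)=j$ iff $j$ lies in the level-$i$ order ideal of $\pi$ but not in the level-$(i{-}1)$ one. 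Since toric varieties depend only on the $\QQ$-row span, this gives the inclusion.

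For part (iii), strictness of the chains in (ii) is witnessed by \ref{ex:strict}, where for $n=4$ and $Q = 2^{[4]}$ the four models have pairwise distinct dimensions. For the two non-inclusions, I will produce explicit binomial certificates in the case $Q = 2^{[n]}$, $n \geq 4$. Padding the cyclic $n=3$ relation by the identity on $\{4,\ldots,n\}$, the binomial
$$
p_{123\,4\cdots n}\, p_{231\,4\cdots n}\, p_{312\,4\cdots n} \;-\; p_{132\,4\cdots n}\, p_{213\,4\cdots n}\, p_{321\,4\cdots n}
$$
vanishes on $\Model_{\bott}$, since the two sides induce the same multiset of order ideals, but a direct inversion count shows it does not vanish on $\Model_{\inv}$, giving $\Model_{\inv} \not\subset \Model_{\bott}$. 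For $\Model_{\birk} \not\subset \Model_{\inv}$, the cleanest argument is dimensional: $\dim \Model_{\birk} = (n-1)^2 \geq 9 > \dim \Model_{\inv}$ when $n \geq 4$ by \ref{ex:strict}.

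The main bookkeeping obstacle will be the Inversion-into-Csiszar substitution in part (ii): one must verify cover by cover that each ordered pair is counted exactly once with the correct $u$ or $v$ factor, and for this the labelling by the moment at which the later of $i,j$ is inserted is the key conceptual step. The other pieces reduce to short telescoping identities, elementary linear algebra, or dimension counts borrowed from \ref{ex:strict}.
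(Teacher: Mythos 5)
Parts (i) and (ii) of your proposal are correct and follow essentially the paper's route: the exact specialization $d_{\hat 0<b}\mapsto c_{\hat 0}c_b$, $d_{a<b}\mapsto c_b$ is a slightly cleaner version of the paper's $d_{a<b}\mapsto c_b$ (exact equality of monomials rather than equality up to the common factor $c_{\hat 0}$), your inversion-into-Csisz\'ar substitution is the same as the paper's, and the row identity for $\Model_{\birk}\subseteq\Model_{\bott}$ is exactly the paper's $\ZZ$-linear combination argument. In (iii), your certificate for $\Model_{\inv}\not\subset\Model_{\bott}$ is sound and in fact handles all $n\ge 4$ uniformly: the padded cubic lies in $I_{\bott}$, and its two monomials map to the distinct inversion monomials $u_{12}^2u_{13}u_{23}^2v_{12}v_{13}^2v_{23}$ and $u_{12}u_{13}^2u_{23}v_{12}^2v_{13}v_{23}^2$ (times a common factor from the padding), so it is not in $I_{\inv}$; since positive parameter points are Zariski dense in the inversion model, some distribution in $\Model_{\inv}$ violates it. The paper instead exhibits an explicit distribution supported on six permutations and then "lifts" it to $n\ge 5$, so your version is arguably tidier there.

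The genuine gap is in your argument for $\Model_{\birk}\not\subset\Model_{\inv}$. The chain of inequalities ``$\dim\Model_{\birk}=(n-1)^2\ge 9>\dim\Model_{\inv}$ for $n\ge 4$ by \ref{ex:strict}'' is false for $n\ge 5$: \ref{ex:strict} only computes the case $n=4$ (where $\dim\Model_{\inv}=6$), while in general $\dim\Model_{\inv}=\binom{n}{2}$ (the dimension of the linear ordering polytope, as noted in Section 6), which already equals $10>9$ at $n=5$. The dimensional idea can be repaired, but you must compare the correct quantities: $(n-1)^2>\binom{n}{2}$ for all $n\ge 3$, and you need to justify the bound $\dim\Model_{\inv}\le\binom{n}{2}$ for general $n$ -- for instance, the inversion model polytope lies in the affine subspace cut out by $x^{u}_{ij}+x^{v}_{ij}=1$ for each pair $i<j$, which has dimension $\binom{n}{2}$. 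Alternatively, follow the paper and lift an explicit certificate, e.g.\ the quadric $p_{1243}p_{4321}-p_{2143}p_{4312}\in I_{\inv}$ padded by $5\cdots n$, together with a point of $\Model_{\birk}$ (the paper uses the uniform distribution on derangements) on which it does not vanish. As written, your proof establishes this non-inclusion only for $n=4$, not for all $n\ge 4$ as the theorem requires.
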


  \begin{proof}
    We begin by establishing (iii). The fact that the inclusions in (ii)
    are strict follows from \ref{ex:strict}. For the second part
    of (iii) consider $n=4$.
    A direct computation as in Section 6 reveals that
    the inversion model $\Model_{\inv}$ is a projective toric variety
    of dimension $6$ and degree $180$ in $\PP^{23}$.
    The Markov basis of $I_{\inv}$ consists of $81$ quadrics.
    Since $\Model_{\birk}$ has dimension $9$, we conclude that
    $\,  \Model_{\birk} \not\subset \Model_{\inv}$.
    An explicit point $p$ in $ \Model_{\birk} \backslash \Model_{\inv}$
    is the uniform distribution on the  nine derangements. This arises
   by setting $a_{ii} = 0$ for all $i$ and $a_{ij} = 1/\sqrt{3}$ for 
all $i \not= j$. The
    quadric $\,  p_{1243} p_{4321} - p_{2143} p_{4312}  \in I_{\inv}\,$
    does not vanish for this particular distribution. 
    
    The ascending model $\Model_{\bott}$ 
    has dimension $11$ and degree $808$. 
    The Markov basis of its toric ideal $I_\bott$ consists of
    six quadrics, $64$ cubics and $93$ quartics.
    One of the cubics is
    \begin{equation}
      \label{eq:botcubic}
      p_{1234} p_{1342} p_{1423} \,-\, p_{1243} p_{1324} p_{1432} \,\, \,\, \in \,\, I_{\bott}. 
    \end{equation}
    An example of a point in
    $ \Model_{\inv} \backslash \Model_{\bott}$ is obtained
    by taking the parameter values
    $$      u_{12} = u_{13} = u_{14} = 0 ,\,
      u_{23}  = u_{24} =  u_{34} =  v_{12} = v_{13} =  v_{23} = v_{24} = 1,\,
      v_{34} = 2,\,      v_{14} = 1/9.    $$
    The resulting distribution is supported on the six permutations in
    \ref{eq:botcubic}. Its coordinates are
    $$p_{1234} = p_{1342} = p_{1423} = 2/9 \quad \text{and} \quad
      p_{1243} = p_{1324} = p_{1432}  =  1/9. $$
    This distribution is not a zero of \ref{eq:botcubic}, and hence it is
    not in the \Bottom model $ \Model_{\bott}$.
  
    The two probability distributions on permutations seen
    above can be lifted to similar counterexamples for $n \geq 5$,
    and we conclude that the non-inclusions are valid for all $n \geq 4$.
  
    The inclusion       $\Model_{\bott} \subset \Model_{\csi}$ in (i) is seen
    by the specialization of parameters 
    that sends $d_{a < b}$ to $c_a$ if $\GeneralPoset$ has a unique maximal element $\hat 1$
    and to $c_b$ if $\GeneralPoset$ has a unique minimum~$\hat 0$.

We lastly prove   the inclusions in (ii).
	The parameters for the  \Csiszar model 
    $\Model_{\rm csi} $ are $d_{a <b}$ where
    $a < b \in \Cov(\GeneralPoset)$.
	If $\MaxChain(\GeneralPoset) = \LinExt(\PartialRanking)$ 
	then	the cover relation $a < b$ means $b = a \cup \{j\}$.
    Thus the following specialization 
    of parameters gives the
    parameterization of $\Model_{\inv}$:
    $$ d_{a < b}\, \,\, \mapsto \,
      \prod_{i \in a , i < j} \!\! u_{ij} 
      \prod_{i \in a , i > j} \!\! v_{ij} . 
    $$
    This shows that the \Inversion model 
    $  \Model_{\rm inv}$ is a subvariety of the
    \Csiszar model  $\Model_{\csi} $.
   
    It remains to show that
    $\,\Model_{\birk} \subset   \Model_{\bott} $. 
    To do this, we let $A$ denote the model matrix for $\,\Model_{\birk}\,$
    and $B$ the model matrix for $\,\Model_{\bott}$. Both matrices
    have their entries in $\{0,1\}$ and they have  $|\LinExt(\PartialRanking)|$ columns.
    The rows $A_{ij}$ of $A$ are indexed by unordered pairs $i,j \in [n] \times [n]$,
    and the rows $B_I$ of $B$ are indexed by subsets of $[n]$. We have the identity
    $$ 
      \begin{matrix}
         A_{ij} \,\, = \,\,\, \sum \bigl\{B_I\,:\,I \in \binom{[n]}{j} \,\, {\rm and} \,\, i \in I \bigr\}
         \,-\,\,  \sum \bigl\{B_I\,:\,I \in \binom{[n]}{j-1} \,\, {\rm and} \,\, i \in I \bigr\}
      \end{matrix}.
    $$
    This shows that every row of $A$ is a $\ZZ$-linear combination of the rows of $B$.
    Hence, the kernel of $A$ contains the kernel of $B$, and this implies that
    the toric ideal $I_A = I_{\birk}$ contains the toric ideal $I_B = I_{\bott}$.
    We conclude that  $\Model_{\birk}$ is a submodel of $\Model_{\bott}$.
  \end{proof}

  In the rest of this paper we consider the  \Bottom and \Csiszar models
  only in the graded situation, that is, when the monomial images
  of all the unknowns   $p_c$, $c \in \MaxChain(\GeneralPoset)$,
  have the same total degree. The latter is equivalent to requiring
  that all maximal chains in $\GeneralPoset$
  have the same cardinality, which in turn is equivalent to $\GeneralPoset$
  being graded. For a graded poset $\GeneralPoset$ we denote by
  $\rk : \GeneralPoset \rightarrow \NN$ its rank function
  and write $\GeneralPoset_i$ for the
  set of its elements of rank $i$. By $\rk(\GeneralPoset)$ we denote the rank of 
  $\GeneralPoset$, which is the maximal rank of any of its elements.

  In the next three sections we undertake a detailed study of the models (b), (a) and (d), in this order.
  The Birkhoff model (c) has already received considerable attention in the
  literature \cite{DiaconisEriksson2006, DiaconisSturmfels1998}, 
  at least for  $\LinExt(\PartialRanking) = \symm_n$, and we content ourselves
  with a few brief remarks. Its model polytope,    the Birkhoff polytope
  of doubly stochastic matrices,
  is a key player in  combinatorial optimization,
  and it is linked to many 
  fields of pure mathematics.
  
  The restriction of the Birkhoff model and its polytope to proper subsets $\LinExt(\PartialRanking)$ 
  of $\symm_n$ has been studied only in some special cases.
  For example, Chan, Robbins and Yuen \cite{ChanRobbinsYuen1999} considered this
  polytope for the constraint poset $\PartialRanking$ given by
  the transitive closure of $j > j-2$ and $j > j-3$ for  $3 \leq j \leq n$.
  They stated a conjecture  on its volume which was proved by Zeilberger
  \cite{Zeilberger1999}.
  We close by noting a formula for the dimension of these polytopes.

  \begin{Proposition}
    \label{prop:birkhoffdim}
    Let $\PartialRanking$ be an arbitrary constraint poset on $[n] = \{1,2,\ldots,n\}$.    Set 
    $$ Z \, =\, \left\{ \,(i,j) \in [n] \times [n]~|~ \pi(i) \neq j \mbox{ for all~} 
                                         \pi \in \LinExt(\PartialRanking)\, \right\} \qquad $$
    $$ \text{and} \qquad C\, = \,\left\{ (i,j) \in [n] \times [n]~|~ (i,j) \not\in Z \mbox{~ and~} 
    {{(i,j') \in Z \mbox{~for~some~} j' > j \mbox{~or} } \atop  
     {(i',j) \in Z \mbox{~for~some~} i' > i \mbox{\phantom{~or}}}}  \right\}.  $$
    The model polytope $\BirkhoffPol$ of the Birkhoff model, expressed using
     coordinates $x_{ij}$ on $\RR^{n \times n}$, equals the face of 
    the classical Birkhoff polytope of bistochastic $n {\times}n $-matrices defined by
        \begin{equation}
       \label{eq:birkhoff}        x_{ij} = 0 \quad \text{    for all $\,(i,j) \in Z$.} 
\end{equation}
    In particular, the dimension of the Birkhoff model polytope is    
     $\,{\rm dim}(\BirkhoffPol) \,=\,n^2 - |Z| - |C|$.
  \end{Proposition}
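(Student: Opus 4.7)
The plan has three stages. First, prove the inclusion $\BirkhoffPol \subseteq F$, where $F$ denotes the face of the classical Birkhoff polytope cut out by the equations $x_{ij} = 0$ for $(i,j) \in Z$. Second, prove the reverse inclusion $F \subseteq \BirkhoffPol$. Third, compute $\dim F$ from the linear system defining $F$ inside $\RR^{n \times n}$.

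The first inclusion is immediate from the definition of $Z$: each vertex $M_\pi$ of $\BirkhoffPol$ corresponds to a linear extension $\pi \in \LinExt(\PartialRanking)$, and $(M_\pi)_{ij} = 0$ at every $(i,j) \in Z$ by the very meaning of $Z$. Bistochasticity holds for every permutation matrix, so $M_\pi \in F$, and taking convex hulls extends the inclusion. The reverse inclusion is the combinatorial heart of the proof. A vertex of $F$ is a permutation matrix $M_\sigma$ whose support avoids $Z$, so one must show that any such $\sigma \in \symm_n$ actually lies in $\LinExt(\PartialRanking)$. I would proceed by contrapositive: suppose $\sigma$ is not a linear extension, so there exist $a, b \in [n]$ with $a < b$ in $\PartialRanking$ yet $\sigma^{-1}(a) > \sigma^{-1}(b)$. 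Using the distributive-lattice structure of $\Order(\PartialRanking)$, I would amplify this single inversion along a chain in $\PartialRanking$ until it forces an index $(\sigma^{-1}(c), c)$ to lie in $Z$, contradicting the hypothesis on $\sigma$. This is the main obstacle, because the definition of $Z$ is global --- forbidden by \emph{every} linear extension --- while the starting inversion is local, and bridging the two requires a careful argument about covers and chains in $\PartialRanking$.

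For the dimension formula, once the identification $\BirkhoffPol = F$ is in hand, $\dim F$ equals $n^2$ minus the rank of the linear system consisting of the $|Z|$ coordinate equations $x_{ij} = 0$ together with the $2n$ row- and column-sum equations. I would reduce this system by a sweep from the bottom-right of the matrix: whenever a position $(i,j)$ is the largest-indexed $Z$-entry in its row or column, the corresponding row- or column-sum equation is used to eliminate $x_{ij}$. The set $C$ is arranged so that $(i,j) \in C$ records exactly those positions at which the row- or column-sum equation still retains an independent pivot after all later $Z$-entries in the same row or column have been substituted, the conditions $j' > j$ and $i' > i$ in its definition being exactly the book-keeping needed for this sweep. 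The rank of the combined system is therefore $|Z| + |C|$, and subtracting from $n^2$ yields $\dim \BirkhoffPol = n^2 - |Z| - |C|$.
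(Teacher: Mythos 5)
Your Stage 1 is fine, but Stage 2 --- which you correctly single out as the combinatorial heart --- is not just the main obstacle, it is a step that cannot be carried out, because the claim you would need (every permutation whose matrix avoids $Z$ is a linear extension) is false. Take $n=4$ and $\PartialRanking$ with the two relations $1<2$ and $3<4$, the poset of \ref{ex:sixmaxchains}, so $\LinExt(\PartialRanking)=\{1234,1324,1342,3124,3142,3412\}$. Here $Z=\{(1,2),(1,4),(4,1),(4,3)\}$, since positions $2$ and $3$ can be occupied by every item. The permutation $\sigma=1432$ avoids $Z$: each of its position--item incidences $(1,1),(2,4),(3,3),(4,2)$ occurs in some linear extension, yet $\sigma$ places item $4$ before item $3$, so $\sigma\notin\LinExt(\PartialRanking)$. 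Hence $M_\sigma$ is a vertex of the face $F$ (a vertex of the Birkhoff polytope lying in $F$ is a vertex of $F$) which, being an extreme point of the Birkhoff polytope, cannot lie in $\BirkhoffPol=\mathrm{conv}\{M_\pi:\pi\in\LinExt(\PartialRanking)\}$. So $F\supsetneq\BirkhoffPol$, and the contrapositive ``amplification along a chain'' you hope for does not exist: feasibility of each value $\sigma(i)$ separately (a local, per-coordinate condition) simply does not imply joint feasibility of $\sigma$. The local-to-global bridge you flagged as delicate is genuinely impossible to build for arbitrary constraint posets.

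You should also know that this is not a defect of your route alone: the paper's own proof asserts exactly the same statement (``following the lines of the Birkhoff--von Neumann Theorem, the vertices of the polytope cut out by the equations are the permutation matrices for $\pi\in\LinExt(\PartialRanking)$''), so your plan mirrors the published argument, gap included. In the example above one checks $\dim\BirkhoffPol=4$ (the six vertices satisfy the affine relation $M_{1342}+M_{3124}=M_{1324}+M_{3142}$, and the remaining five are affinely independent), whereas $F$ has eight vertices (the six linear extensions together with $1432$ and $3214$) and $|Z|=4$, $|C|=7$, so $n^2-|Z|-|C|=5$. Thus the dimension formula fails as well; your Stage 3 sweep does compute the rank of the system defining $F$ (essentially the paper's lexicographic leading-term argument), but that quantity is $\dim F$, not $\dim\BirkhoffPol$. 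No refinement of Stage 2 can close this: to get a correct statement one must either restrict the class of posets $\PartialRanking$ (the antichain and chain examples following the proposition are unproblematic) or abandon the identification with the face $F$ and compute $\dim\BirkhoffPol$ directly from the rank of the model matrix.
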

  \begin{proof}
    Clearly, the model polytope $\BirkhoffPol$ of the \Birkhoff model is contained in the
    classical    \Birkhoff polytope. Equally obvious is that all equations \ref{eq:birkhoff} are valid 
    for the model polytope.
    Hence $\BirkhoffPol$ is contained in the polytope cut out from the classical
    Birkhoff polytope by \ref{eq:birkhoff}.

    Following the lines of the  Birkhoff-von Neumann Theorem (see e.g. \cite[(5.2)]{Barvinok2002}),
    we note that the vertices of the polytope cut out by \ref{eq:birkhoff} from the classical Birkhoff polytope
    are the permutation matrices for the permutations $\pi \in \LinExt(\PartialRanking)$. 
    The first assertion now follows.
    
    The linear relations on the Birkhoff polytope
    state that all row and column sums are~$1$. \
  We set      $x_{ij} = 0$ for $(i,j) \in Z$.
  In the resulting linear relations precisely
  the variables $x_{ij}$ for $(i,j) \in C$ are the leading terms with respect to 
  order of the variables induced by the lexicographic order on the index tuples.
  This proves the dimension statement.
  \end{proof}
   
   We  illustrate~\ref{prop:birkhoffdim} with two simple examples.
      If $\PartialRanking$ is an $n$-element antichain then $Z= \emptyset$ and 
  $C = \{(1,n),(2,n), \ldots , (n,n), (n,n-1), \ldots (n,1)\}$. Here
  our formula gives    the dimension $\,n^2-0-(2n-1) = (n-1)^2$ of the classical
  Birkhoff polytope. If $\PartialRanking$ is the $n$-chain $1 {<} 2 {<} \cdots {<} n$
     then $Z = \{ (i,j) \in [n] \times [n]~ |~ i \neq j\}$ and 
  $C = \{ (i,i) | i \in [n]\}$. Here the model polytope is just one point, since
  $\,{\rm dim}(\BirkhoffPol) =   n^2 - |Z| - |C| = n^2-n(n-1)-n = 0$.    

\section{The \Csiszar model}

  The \Csiszar model for the Boolean lattice $\GeneralPoset = 2^{[n]}$ was
  studied by Vill\~o Csisz\'ar in \cite{Csiszar2009a, Csiszar2009b}.
  She calls it the {\em L-decomposable model} where the letter  ``L'' refers to Luce \cite{Luce1959}.
  Indeed, the model can be seen as the generic model satisfying Luce-decomposability (see \cite{Marden1995}).
  We prefer to call it the {\em \Csiszar model}, to credit her work for
  introducing this model into algebraic statistics.
We note that the \Csiszar model for $\GeneralPoset =2^{[n]}$
also appears in work on multiple testing procedures by Hommel {\it et.~al.}
  \cite{HommelBretzMaurer2007}, but with a different
  coordinatization of its model polytope.
    Throughout this section, we fix a graded poset $\GeneralPoset$ of positive rank.

  We begin by defining a $0$-$1$-matrix $A=\CsiszarPol$ that represents the Csisz\'ar model.
  Our construction is based on the technique employed for
  $Q = 2^{[n]}$ in  Csisz\'ar's proof of \cite[Theorem 1]{Csiszar2009a}.
  The columns of $\CsiszarPol$ are indexed by the unknown probabilities $p_\pi$
  where $\pi \in \MaxChain(\GeneralPoset)$, and the rows of $\CsiszarPol$ are
  indexed by the model parameters  $d_{a < b}$ where
  $(a {<} b) \in \Cov(\GeneralPoset)$. 
  We write $\MinCov(Q)$ for the set of cover relations $a < b$ for some
  element $a \in \GeneralPoset_0$   of rank~$0$.     
  
  Consider the discrete undirected graphical model   \cite{DrtonSturmfelsSullivant2009, 
  GeigerMeekSturmfels2006}  given by the $n$-chain
  graph  $G = ([n],E)$ with edge set $E = \{ \{ i,i+1\}~ |~1 \leq i \leq n-1\}$.
  We take as the states of node $i$ the set
  $\GeneralPoset_i$ of all elements of rank $i$ in $\GeneralPoset$. 
  The $n$-chain graph $G$ is chordal (or decomposable), so the
  five equivalent conditions of \cite[Theorem 4.4]{GeigerMeekSturmfels2006}  hold for $G$.
  Let $A_G$ denote the associated model matrix \cite[\S 2.2]{GeigerMeekSturmfels2006}.
  It has $\prod_{i=0}^n |\GeneralPoset_i|$ columns indexed by tuples $(a_0,\ldots, a_n)$  of
  elements $a_i \in \GeneralPoset_i$ and $\sum_{i=0}^{n-1} |\GeneralPoset_i| \cdot |\GeneralPoset_{i+1}|$ 
  rows indexed by pairs $(a,b)$ of elements of $\GeneralPoset$ from consecutive ranks.
  Its entries are $0$ or $1$ according to the pattern for an undirected graphical model.
  More precisely, in row $(a,b)$ all columns are $0$ except for the rows indexed by tuples
  containing $a$ and $b$. 
  We shall use the following  key facts from
    \cite[Theorem 4.4]{GeigerMeekSturmfels2006}: the image of
  the monomial map given by $A_G$ is closed, and
  the cone spanned by the columns of  
  $A_G$ contains all non-negative vectors in the column space  of $A_G$. 

  As in Csisz\'ar's proof of  \cite[Theorem 1]{Csiszar2009a},
  we focus on the submatrix $A'_G$ of $A_G$
  whose column labels $(a_0,\ldots, a_n)$ correspond
  to  maximal chains $a_0 <  \cdots <  a_n$ from $\MaxChain(\GeneralPoset)$. 
  Many of the rows of $A'_G$ are entirely
  zero, namely, all those rows indexed by pairs $(a,b)$,
  where $a$ is not covered by $b$ in $\GeneralPoset$. Let $A''_G$ denote the matrix
  obtained from $A'_G$ by deleting all such zero rows. The remaining
  rows are indexed by pairs $(a,b) \in \GeneralPoset_i \times \GeneralPoset_{i+1}$
  for some $i$.
  Equivalently, the rows of $A''_G$ are indexed by $\Cov(\GeneralPoset)$.
  This shows that the toric model $A''_G$ is precisely
  our \Csiszar model, and, with this identification of coordinates, our
  polytope $\CsiszarPol$ coincides with the convex hull of the columns of  
  $A''_G$.  Now we are in a position to give a description of the model polytope
  $\CsiszarPol$  in terms of   linear equalities and inequalities.

  \begin{Theorem} \label{thm:csiszarpolytope}
    Let $\GeneralPoset$ be a graded poset of rank $\geq 1$ and
    $\CsiszarPol \subseteq \RR^{\Cov(\GeneralPoset)}$  the model polytope of its
    Csisz\'ar model, with coordinates 
	$x_{a < b}$ indexed by cover relations $a < b $ in $\Cov(\GeneralPoset)$.
    Then $\CsiszarPol$ is of dimension 
	$|\Cov(\GeneralPoset)| - |\GeneralPoset| + |\GeneralPoset_n| + |\GeneralPoset_0|-1$. 
	Inside the orthant defined by
    \begin{equation}
        \label{csiszar:facets} 
       x_{a < b} \,\geq \, 0 \qquad \hbox{for} \,\,\,\, (a < b) \in \Cov(\GeneralPoset),
    \end{equation}
    the polytope $\CsiszarPol$ is the solution set of the inhomogeneous linear equation
    \begin{eqnarray} \label{csiszar:equalities-b}
        \sum_{a < b \in \MinCov(\GeneralPoset)} \!\!\! x_{a < b} \,\,\,\, =  \,\,\,\, 1
    \end{eqnarray}
    together with the system of linear homogeneous equations
    \begin{eqnarray} \label{csiszar:equalities-a}
      \sum_{b \in \nabla a} x_{a < b} \,\,=\,\, \sum_{b \in \Delta a} x_{b < a} 
      \,\,\quad \hbox{for} \,\,\,\, a \in \GeneralPoset \backslash (\GeneralPoset_0 \cup \GeneralPoset_n),
    \end{eqnarray}
    where $\nabla a$ is the set of $b$ that cover $a$, and
    $\Delta a$ is the set of $b$ that are covered by $a$.
  \end{Theorem}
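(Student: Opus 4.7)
The polytope $\CsiszarPol$ is the convex hull of the columns of the matrix $\CsiszarPol$: one column $A_\pi \in \{0,1\}^{\Cov(Q)}$ for each maximal chain $\pi = (a_0 < a_1 < \cdots < a_n)$ of $Q$, with $A_\pi(c<d) = 1$ iff $(c,d) = (a_i, a_{i+1})$ for some $i$. My plan is to prove the polyhedral description in three steps and then harvest the dimension formula from a rank computation.

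\emph{Forward inclusion.} I would first verify that every $A_\pi$ lies in the region described by \ref{csiszar:facets}--\ref{csiszar:equalities-a}: nonnegativity is immediate, the unique minimal cover of $\pi$ is $(a_0, a_1) \in \MinCov(Q)$ (giving \ref{csiszar:equalities-b}), and each non-extremal element $a$ either equals some $a_i$ with $0 < i < n$ (contributing $1$ on each side of \ref{csiszar:equalities-a}) or is off $\pi$ entirely (both sides $0$). Convex combinations preserve these conditions, so $\CsiszarPol$ is contained in the stated region.

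\emph{Reverse inclusion via flow decomposition.} The main step is showing that every non-negative $x$ satisfying \ref{csiszar:equalities-b}, \ref{csiszar:equalities-a} is a convex combination of the $A_\pi$. I would proceed by induction on $|\supp(x)|$, interpreting \ref{csiszar:equalities-a} as flow conservation on the Hasse diagram (with sources $Q_0$ and sinks $Q_n$). Pick a cover $(a_0, a_1) \in \MinCov(Q)$ with $x_{a_0<a_1} > 0$ (existing by \ref{csiszar:equalities-b}); since positive mass enters $a_1$, equation \ref{csiszar:equalities-a} at $a_1$ forces some $x_{a_1<a_2} > 0$, and iterating through ranks $2, \ldots, n-1$ yields a maximal chain $\pi$ with all $x_{a_i<a_{i+1}} > 0$. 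Setting $\lambda := \min_i x_{a_i<a_{i+1}} > 0$, the vector $x - \lambda A_\pi$ is non-negative, still satisfies \ref{csiszar:equalities-a}, has $\MinCov$-sum $1 - \lambda$, and has strictly smaller support. Iterating and normalizing produces the desired convex combination.

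\emph{Dimension count.} Next I would note that the barycenter $\frac{1}{|\MaxChain(Q)|}\sum_\pi A_\pi$ has all coordinates strictly positive, because gradedness of $Q$ forces every cover to lie on some maximal chain; hence $\CsiszarPol$ meets the relative interior of the orthant \ref{csiszar:facets}, and $\dim(\CsiszarPol) = |\Cov(Q)| - \rank(\Sigma)$, where $\Sigma$ is the coefficient matrix of the affine system \ref{csiszar:equalities-b}, \ref{csiszar:equalities-a}. I claim $\rank(\Sigma) = |Q| - |Q_0| - |Q_n| + 1$. For the $|Q| - |Q_0| - |Q_n|$ homogeneous equations in \ref{csiszar:equalities-a}, a dependence $\sum_a c_a\,(\text{eq at }a) = 0$ evaluated on a cover $(e<f)$ gives $c_e[\text{$e$ interior}] = c_f[\text{$f$ interior}]$; covers from $Q_0$ kill $c_f$ at rank $1$, and propagation along a maximal chain through any given interior element (which exists by gradedness) forces $c_a = 0$ throughout. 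Independence of \ref{csiszar:equalities-b} from \ref{csiszar:equalities-a} follows by a dual coefficient-matching argument: any candidate representation must set $\lambda_f = -1$ for rank-$1$ elements $f$ and $\lambda_e = 0$ for rank-$(n-1)$ elements $e$, which propagation makes incompatible when $n \geq 2$ (immediate if $n = 2$, via a chain argument if $n \geq 3$); the $n = 1$ case is trivial since the homogeneous system is empty.

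\emph{Main obstacle.} The delicate part is the linear-independence analysis in the dimension count: both propagation arguments rest crucially on gradedness of $Q$ to supply a maximal chain through each interior element, and the low-rank boundary cases ($n = 1, 2$) need careful handling. The flow-decomposition step, by contrast, is routine once one recognizes the Hasse diagram as a layered flow network; the framing must be stated for arbitrary graded $Q$, not merely distributive lattices, since (a) and (b) were defined in that generality.
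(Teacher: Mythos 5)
Your proof is correct, but it takes a genuinely different route from the paper. The paper never argues the reverse inclusion by hand: it realizes the Csisz\'ar model as the submatrix $A''_G$ of the model matrix $A_G$ of the decomposable $n$-chain graphical model, observes that the columns of $A''_G$ span a face of the $A_G$-polytope, and then imports from \cite[Theorem 4.4]{GeigerMeekSturmfels2006} the fact that such a polytope equals the set of \emph{all} non-negative points of the affine span of its columns; the only remaining work is to identify that affine span with the solution space of \ref{csiszar:equalities-b} and \ref{csiszar:equalities-a}. You instead prove the hard inclusion directly by a layered flow (path) decomposition on the Hasse diagram, peeling off maximal chains supported on $\supp(x)$ and inducting on the support, and you then obtain the dimension by an explicit rank computation for the constraint matrix, using a strictly positive barycenter to show no facet inequality is active on all of $\CsiszarPol$. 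Both arguments are sound; what each buys is different. The paper's route is shorter given the graphical-model machinery and, more importantly, the identification $\CsiszarPol = A''_G$ as a face of $A_G$ is reused later (for the quadratic Gr\"obner basis in \ref{thm:csiszargroebner} and the rational MLE remark), whereas your route is elementary and self-contained, works verbatim for any graded poset, and makes the dimension count fully explicit --- a point the paper's proof leaves largely implicit. Two small polish items for your write-up: when the peeling terminates, note that a non-negative flow with $\MinCov$-sum $0$ must vanish identically (propagate zeros up the ranks), so the coefficients automatically sum to $1$ and no ``normalizing'' is needed; and state up front the standard consequences of gradedness you use repeatedly (every element, and indeed every cover relation, lies on a maximal chain, which meets each rank exactly once).
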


  \begin{proof}
    Let $G = ([n],E)$ with $E = \{ \{ i,i+1\}~ |~1 \leq i \leq n-1\}$
    be the $n$-chain and $A_G$ the defining matrix of its graphical model as
    discussed above. Also let $A_G'$ and $A_G''$ be as above. 

    The $n$-chain graph $G$ is decomposable, so the five equivalent conditions in
    \cite[Theorem 4.4]{GeigerMeekSturmfels2006} are true.
    The fifth condition, that the exponential family is closed in
    the probability simplex, is equivalent to the
    statement that the model polytope of that $n$-chain model is defined
    by linear equations and non-negativity constraints only.
    See \cite{KatTho2007} for a toric algebra perspective.
    We have shown that the toric model of $A''_G$
    is our \Csiszar model. With this identification, the model
    polytope $\CsiszarPol$ coincides with the convex hull of the columns of  	$A''_G$.

    The matrix $A'_G$ was constructed so that its columns are precisely the
    points on a face of the model polytope for $A_G$. Hence the model polytope of the 
    \Csiszar model is obtained from the earlier polytope by simply setting  
    some of the non-negative coordinates to zero. This implies that $\CsiszarPol$ inherits
    all the desirable properties spelled out in Theorem 4.4 of \cite{GeigerMeekSturmfels2006}.
    In particular, its exponential family is closed,  and the polytope $\CsiszarPol$
    coincides with the set of all non-negative points in the affine space
    spanned by the columns of the matrix $A''_G$.

    At this stage we only need to show that the affine span 
    of the columns of $A''_G$ equals
    the solution space of \ref{csiszar:equalities-b} and \ref{csiszar:equalities-a}.
    The equation \ref{csiszar:equalities-b} holds for a vertex of the model polytope because any maximal chain  
    contains exactly one cover relation involving an element of rank $0$ and an element of rank $1$.
    The equations \ref{csiszar:equalities-a} hold for a vertex of the model polytope because,
    given any element $a \in \GeneralPoset$, a maximal chain
    either contains no cover relation involving $a$ or exactly two, one of the form $b < a$ and one
    of the form $a < b'$. Hence each column of $A''_G$ satisfies 
    \ref{csiszar:equalities-b} and \ref{csiszar:equalities-a}. Conversely, any $0$-$1$-solution
    of these equations must come from a maximal chain in $\GeneralPoset$, and hence
    is among the columns of $A''_G$.
  \end{proof}

\begin{Remark} The maximal likelihood estimator $\hat {\bf p}$ for the
 \Csiszar model is a rational function of the sufficient statistics ${\bf b}$.
 Indeed, as for any toric model \cite[Theorem 1.10]{PachterSturmfels2005},
 the MLE   is the unique non-negative real solution of the linear equations
 $A''_G \cdot {\bf p} = {\bf b}$ where  ${\bf p} \in V(I_{\csi})$.
 An explicit formula for $\hat {\bf p}$ as a rational function in ${\bf b}$ is obtained
 from the corresponding formula for the $n$-chain model $A_G$ by setting the
redundant sufficient statistics to zero.  This specialization works because
 the \Csiszar model is a face of the $n$-chain model, and all formulas are compatible
 with our transition from $A_G$ to $A''_G$ via $A'_G$.
 On the other hand, the same idea of computing the MLE rationally
 by restriction no longer works for our submodels of the \Csiszar model, such as the 
 Birkhoff model or the ascending model. For instance, for $n=3$,  the
 matrix  $A''_G $ is invertible and $ \hat {\bf p} = (A''_G)^{-1}  {\bf b}$,
 while the MLE for $ I_{\rm \birk}   = I_{\bott} $ requires Cardono's formula:
 we must solve a cubic equation to get the MLE. \qed
\end{Remark}
  
  The toric ideal $I_{\csi}$ of the \Csiszar model is the kernel of the ring homomorphism
  $$ \qquad \KK[p] \rightarrow \KK[ d] \,,\,\,\,
  p_\pi \,\mapsto\, d_{a_0 < a_1} d_{a_1 < a_2} \,\cdots\, d_{a_{n-1} < a_n}
  \qquad \hbox{for} \,\,\,
  \pi = (a_0 {<} a_1 {<} \cdots {<} a_n). $$
  The minimal generators of $I_{\csi}$ 
  form the Markov basis of $\Model_{\csi}$.
  As shown in the proof of \ref{thm:csiszarpolytope}, the \Csiszar model polytope 
  $\CsiszarPol = A''_G$  inherits the  equivalent
  conditions (b),(c),(d),(e) in \cite[Theorem 4.4]{GeigerMeekSturmfels2006}
  from the larger model $A_G$.  In particular, the toric ideal $I_{\csi}$ has a Gr\"obner basis 
  consisting of quadratic binomials.  
  We shall now describe this Gr\"obner basis explicitly.
  It generalizes the Markov basis  for  
  $\GeneralPoset = 2^{[n]}$ in \cite[Theorem 3.1]{Csiszar2009a}.

  \begin{Theorem}
    \label{thm:csiszargroebner}  
	A Gr\"obner basis for the toric ideal $I_{\csi}$ of the \Csiszar model
    on a graded poset $\GeneralPoset$ is given by all quadratic binomials of the form
    \begin{equation}
      \label{eq:csiszarquadric}
	  p_{\pi_1 \pi_2} \cdot p_{\pi_1' \pi_2'} \,-\, p_{\pi_1\pi_2'} \cdot p_{\pi_1' \pi_2}, 
    \end{equation}
	where the chains $ \pi_1$ and $\pi_1'$ have the same ending point and
	both $\pi_2$ and $\pi_2'$ start there.
  \end{Theorem}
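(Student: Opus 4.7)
The plan proceeds in two stages. First I would verify membership: each quadric \ref{eq:csiszarquadric} lies in $I_{\csi}$. Given $\pi = \pi_1 \pi_2$ and $\pi' = \pi_1' \pi_2'$ with $\pi_1, \pi_1'$ ending at a common element $a \in \GeneralPoset$ and $\pi_2, \pi_2'$ starting at $a$, the concatenations $\pi_1 \pi_2'$ and $\pi_1' \pi_2$ are again maximal chains of $\GeneralPoset$, since each half-chain is saturated and the joins at $a$ produce saturated chains through $a$. Under the Csisz\'ar monomial map $p_\pi \mapsto d_{a_0<a_1}\cdots d_{a_{n-1}<a_n}$, both $p_{\pi_1\pi_2}p_{\pi_1'\pi_2'}$ and $p_{\pi_1\pi_2'}p_{\pi_1'\pi_2}$ specialize to the same monomial, namely the product of the $d$-variables indexed by the cover relations appearing (with multiplicity) in the four half-chains $\pi_1, \pi_1', \pi_2, \pi_2'$.

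Second, I would establish the Gr\"obner basis property by inheritance from the $n$-chain graphical model introduced in the proof of \ref{thm:csiszarpolytope}. As recalled there, $\CsiszarPol$ is the face of the model polytope of $A_G$ supported on the columns indexed by $\MaxChain(\GeneralPoset)$, and the ambient decomposable model $A_G$ has a known quadratic Gr\"obner basis: for each rank $i$ and each pair of ambient tuples $(a_0,\ldots,a_n)$, $(a_0',\ldots,a_n')$ with $a_i = a_i'$, the ``conditional independence'' exchange that swaps tails beyond coordinate $i$. Choose a term order on $\KK[p]$ in which the variables $p_\tau$ indexed by non-maximal-chain tuples $\tau$ dominate those indexed by maximal chains. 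Since the Csisz\'ar ideal arises as the face ideal $I_{\csi} = \bigl(I_{A_G} + (p_\tau : \tau \notin \MaxChain(\GeneralPoset))\bigr) \cap \KK[p_\pi : \pi \in \MaxChain(\GeneralPoset)]$, the standard face inheritance for toric ideals yields a Gr\"obner basis of $I_{\csi}$ by retaining those ambient Gr\"obner quadrics whose support lies entirely in $\MaxChain(\GeneralPoset)$.

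The final step is to identify the surviving quadrics with the stated form. An ambient exchange quadric at coordinate $i$ survives precisely when both starting tuples \emph{and} both tail-swapped tuples are maximal chains of $\GeneralPoset$. The first condition forces $\pi, \pi' \in \MaxChain(\GeneralPoset)$ sharing the element $a = a_i = a_i'$; the second is then automatic, since both tail-swapped chains inherit saturation from $\pi, \pi'$ (the seam $a \lessdot (\text{next element})$ is already a cover in $\GeneralPoset$). Writing $\pi_1, \pi_1'$ for the prefixes ending at $a$ and $\pi_2, \pi_2'$ for the suffixes starting there recovers exactly the form \ref{eq:csiszarquadric}.

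The main obstacle is the face-inheritance step: one must carefully justify that, for the chosen term order, passing from the ambient Gr\"obner basis of $I_{A_G}$ to its restriction to face variables introduces no new Gr\"obner basis elements. An alternative route would be a direct Buchberger reduction of the $S$-pairs among the quadrics \ref{eq:csiszarquadric} via a ``sorting'' argument on maximal chains (ordered lexicographically by their element sequences), but the inheritance approach exploits the graphical-model groundwork already laid in the proof of \ref{thm:csiszarpolytope} and keeps the argument compact.
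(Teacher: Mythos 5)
Your proposal is correct and follows essentially the same route as the paper: verify that the quadrics \ref{eq:csiszarquadric} lie in $I_{\csi}$, and then inherit the quadratic Gr\"obner basis of the $n$-chain decomposable graphical model $A_G$ (guaranteed by \cite[Theorem 4.4]{GeigerMeekSturmfels2006}) through the fact that $\CsiszarPol = A''_G$ is a face of the $A_G$-polytope. The only cosmetic difference is that the paper packages the inheritance step via squarefree initial terms and regular unimodular triangulations (\cite[Corollary 8.9]{Sturmfels1996}) restricted to the face, whereas you restrict the ambient Gr\"obner basis to the face variables directly -- both rest on the same standard face-inheritance fact for toric ideals, which your argument (binomials with one monomial supported on a face have both monomials supported there) correctly handles.
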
 

  \begin{proof}
    It is easy to check that the binomial quadrics that lie in the ideal $I_{\csi}$
    are precisely the quadrics \ref{eq:csiszarquadric}. These are inherited
    from the conditional independence statements valid for the $n$-chain graphical model $G$.
    These statements translate into a quadratic 
    Gr\"obner basis for the toric ideal of the matrix $A_G$. The leading terms
    of that Gr\"obner basis are squarefree, so by \cite[Corollary 8.9]{Sturmfels1996} they define a regular
    unimodular triangulation of the convex hull of the columns of $A_G$.
    Since $\CsiszarPol = A''_G$  is a face of that polytope, that face inherits
    the regular unimodular triangulation from $A_G$. We conclude that the
    Gr\"obner basis which specifies this regular triangulation of $\CsiszarPol$
    consists precisely of the quadrics  \ref{eq:csiszarquadric}. 
  \end{proof}

  The Gr\"obner basis \ref{eq:csiszarquadric}  reveals that the
    \Csiszar model has desirable algebraic properties:
 
  \begin{Corollary}
    The coordinate ring $\KK[\,p\,] / I_{\csi}$ of the \Csiszar model over any field $\KK$ is
	Cohen-Macaulay and Koszul. Its Krull dimension  equals
	$|\Cov(\GeneralPoset)| - |\GeneralPoset| + |\GeneralPoset_n| + |\GeneralPoset_0|$.
  \end{Corollary}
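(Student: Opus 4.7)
The plan is to extract all three claims from the quadratic squarefree Gröbner basis supplied by \ref{thm:csiszargroebner} together with the polytope dimension established in \ref{thm:csiszarpolytope}. I would organize the argument around the standard slogan: a homogeneous toric ideal with a squarefree quadratic initial ideal whose associated simplicial complex is shellable gives a Cohen--Macaulay, Koszul quotient whose Krull dimension matches $1 + \dim(\mathrm{conv}(A))$.

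First I would handle the Krull dimension, which is the routine part. For any toric ideal $I_A$ in $\KK[\,p\,]$, one has $\dim(\KK[\,p\,]/I_A) = \mathrm{rank}(A) = 1 + \dim(\mathrm{conv}(A))$. Substituting the value $|\Cov(\GeneralPoset)| - |\GeneralPoset| + |\GeneralPoset_n| + |\GeneralPoset_0| - 1$ from \ref{thm:csiszarpolytope} yields the claimed formula $|\Cov(\GeneralPoset)| - |\GeneralPoset| + |\GeneralPoset_n| + |\GeneralPoset_0|$.

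Next I would address Koszulness. By \ref{thm:csiszargroebner}, $I_{\csi}$ admits a Gröbner basis of quadratic binomials, so its initial ideal is generated by squarefree quadratic monomials. By Fröberg's theorem, the quotient of a polynomial ring by an ideal generated by squarefree quadratic monomials (equivalently, the Stanley--Reisner ring of a flag complex) is Koszul. Since Koszulness is preserved under passing from a squarefree quadratic initial ideal back to the original ideal --- the standard argument compares the bigraded Betti numbers via the flat family to $\mathrm{in}(I_{\csi})$, using that Hilbert series agree and Tor-dimensions satisfy upper semicontinuity --- we conclude that $\KK[\,p\,]/I_{\csi}$ is Koszul.

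For the Cohen--Macaulay assertion, which I expect to be the only genuinely nontrivial step, I would argue via the initial ideal. The squarefree initial ideal $\mathrm{in}(I_{\csi})$ is the Stanley--Reisner ideal of the regular unimodular triangulation $\Delta$ of the Csiszár polytope $\CsiszarPol$ identified in the proof of \ref{thm:csiszargroebner}. Any regular triangulation of a convex polytope is shellable (by a pulling/line-shelling argument in the spirit of Bruggesser--Mani, see for instance Ziegler's \textit{Lectures on Polytopes}), so $\Delta$ is shellable and therefore its Stanley--Reisner ring is Cohen--Macaulay. Applying the classical inequality $\mathrm{depth}(\KK[\,p\,]/I_{\csi}) \geq \mathrm{depth}(\KK[\,p\,]/\mathrm{in}(I_{\csi}))$ together with the equality of Krull dimensions, we conclude that $\KK[\,p\,]/I_{\csi}$ is Cohen--Macaulay as well. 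A secondary, more hands-on route would be to exploit the fact that $\CsiszarPol$ arises as a face of the model polytope of the $n$-chain graphical model $A_G$, whose coordinate ring is Cohen--Macaulay by property (d) of \cite[Theorem 4.4]{GeigerMeekSturmfels2006}; one then checks that the passage from $A_G$ to $A''_G$ (setting the redundant sufficient statistics to zero) preserves the squarefree quadratic Gröbner basis and hence the shellability of the associated complex. Either route closes the proof.
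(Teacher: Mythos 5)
Your proposal is correct, and the dimension and Koszulness parts coincide with the paper's argument (Krull dimension $=1+\dim \CsiszarPol$ from \ref{thm:csiszarpolytope}; quadratic Gr\"obner basis $\Rightarrow$ Koszul, which the paper simply cites as standard while you spell out Fr\"oberg plus the degeneration transfer). Where you genuinely diverge is the Cohen--Macaulay step. The paper argues: squarefree initial ideal $\Rightarrow$ the semigroup algebra is normal (via \cite[Proposition 13.15]{Sturmfels1996}) $\Rightarrow$ Cohen--Macaulay by Hochster's theorem \cite[Theorem 1]{Hochster1972}. You instead stay on the combinatorial side of the degeneration: the squarefree quadratic initial ideal is the Stanley--Reisner ideal of the regular unimodular triangulation of $\CsiszarPol$, regular triangulations are shellable, so $\KK[\,p\,]/\mathrm{in}(I_{\csi})$ is Cohen--Macaulay over any field, and the inequality $\mathrm{depth}(\KK[\,p\,]/I_{\csi}) \geq \mathrm{depth}(\KK[\,p\,]/\mathrm{in}(I_{\csi}))$ together with equality of dimensions lifts this to $\KK[\,p\,]/I_{\csi}$. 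Both routes are sound and rest on the same input (\ref{thm:csiszargroebner}); the paper's route has the advantage of delivering normality of the toric ring as a byproduct (a fact it reuses, e.g.\ for the ascending model), while yours avoids Hochster's theorem at the cost of invoking shellability of regular triangulations and depth semicontinuity under Gr\"obner degeneration, and it makes the ``over any field'' claim visibly field-independent. Your secondary suggestion, inheriting Cohen--Macaulayness from the $n$-chain graphical model $A_G$ by passing to the face $A''_G$, is closer in spirit to how the paper derives the polytope description in \ref{thm:csiszarpolytope}, but as you present it it is only a sketch; the primary route is the one that actually closes the proof.
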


  \begin{proof}
    Since $I_{\csi}$ has a quadratic Gr\"obner basis, by \ref{thm:csiszargroebner},
    it follows that $\KK[\,p\,] / I_{\csi}$
    is Koszul. Again by \ref{thm:csiszargroebner} there is a squarefree initial ideal 
    of $I_{\csi}$. Hence by \cite[Proposition 13.15]{Sturmfels1996} the polytope 
    the semigroup algebra $\KK[\,p\,] / I_{\csi}$ is normal. 
    and hence Cohen-Macaulay, by Hochster's Theorem \cite[Theorem 1]{Hochster1972}.
    The dimension of this semigroup algebra is one more than the
    dimension of its polytope, given in \ref{thm:csiszarpolytope}.
  \end{proof}
    
  For computations it is convenient to represent
  the quadrics in \ref{eq:csiszarquadric} as the $2 {\times} 2$-minors of 
  certain natural  matrices $M_q$
  that are indexed by the elements $q$ of the poset $\GeneralPoset$.
  The row labels of the matrix $M_q$ are the maximal chains in the order ideal
  $\GeneralPoset_{\leq q} = \{a \in \GeneralPoset : a \leq q \}$
  and the column labels of $M_q$ are the maximal chains in the  filter
  $\GeneralPoset_{\geq q} = \{b \in \GeneralPoset  :q \leq b \}$.
  Thus $M_q$ is a matrix of format $|\MaxChain(\GeneralPoset_{\leq q})| \times 
  |\MaxChain(\GeneralPoset_{\geq q}) |$.
  We define $M_q$ as follows. The entry of $M_q$ in the row labeled
  $\pi_1 \in \MaxChain(\GeneralPoset_{\leq q})$
  and the column labeled $\pi_2 \in \GeneralPoset_{\geq q}$  is the unknown $p_\pi$
  where $\pi$ denotes the maximal chain of $\GeneralPoset$ that is
  obtained by concatenating $\pi_1$ and $\pi_2$.

  \begin{Corollary}
    The Markov basis of the Csisz\'ar ideal $I_\csi$ consists of the
    $2 {\times} 2$-minors of the matrices $M_q$, where $q$ runs over $\GeneralPoset$.
    This Markov basis is also a Gr\"obner basis.
  \end{Corollary}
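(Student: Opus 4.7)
The plan is to reduce this corollary directly to \ref{thm:csiszargroebner} by showing that the generating set described in the corollary is literally the same set of binomials.

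First, I would unpack the definition of $M_q$. By construction, each entry $(M_q)_{\pi_1,\pi_2}$ is the unknown $p_{\pi_1 \concat \pi_2}$, where $\pi_1$ is a maximal chain in $\GeneralPoset_{\leq q}$ and $\pi_2$ is a maximal chain in $\GeneralPoset_{\geq q}$. A $2\times 2$-minor of $M_q$ is therefore a binomial of the form
\[
 p_{\pi_1 \concat \pi_2}\, p_{\pi_1' \concat \pi_2'} \,-\, p_{\pi_1 \concat \pi_2'}\, p_{\pi_1' \concat \pi_2},
\]
where $\pi_1,\pi_1'$ both end at $q$ and $\pi_2,\pi_2'$ both start at $q$. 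This is exactly the shape of the binomial in \ref{eq:csiszarquadric}.

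Second, I would verify the two inclusions. For one direction, every $2\times 2$-minor of every $M_q$ clearly has the form \ref{eq:csiszarquadric}, so it lies in the Gröbner basis produced by \ref{thm:csiszargroebner}. For the converse, I would observe that any binomial of the form \ref{eq:csiszarquadric} determines a unique common meeting point $q$: it is the final element of $\pi_1$ and $\pi_1'$, and the initial element of $\pi_2$ and $\pi_2'$. Hence that binomial is a $2\times 2$-minor of $M_q$ for this specific $q$. Thus the two generating sets coincide, and both the Markov basis and the Gröbner basis statements are inherited directly from \ref{thm:csiszargroebner}.

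Since this is essentially a translation between two equivalent notations for the same set of quadrics, there is no real obstacle; the only thing to be careful about is the matching between chains and matrix indices, which is the content of the unpacking step above. The fact that a Gröbner basis of a toric ideal is automatically a Markov basis then yields the Markov basis assertion at no extra cost.
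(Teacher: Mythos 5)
Your proof is correct and is essentially the paper's own argument: identify the $2\times 2$-minors of the matrices $M_q$ with the quadrics \ref{eq:csiszarquadric} and then quote \ref{thm:csiszargroebner}, with the Markov basis claim following since a Gr\"obner basis generates the ideal. The one slip is your assertion that a binomial of the form \ref{eq:csiszarquadric} determines $q$ uniquely --- it does not (the paper notes this explicitly; e.g.\ for $\GeneralPoset = 2^{[5]}$ thirty of the minors occur in two different matrices $M_q$, since the two chains may share elements of several ranks) --- but only the existence of some $q$ is needed, so this does not affect the validity of the argument.
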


  \begin{proof}
    Each $2 \times 2$-minor of $M_q$ has the form required in \ref{eq:csiszarquadric},
    and, conversely, each binomial in \ref{eq:csiszarquadric}
    occurs  as a $2 {\times} 2$-minor of $M_q$ for some $q$. Note that this
    element $q \in Q$ is generally not unique for a given binomial.
     The Gr\"obner basis statement is a part of  \ref{thm:csiszargroebner}.
  \end{proof}
    
  We illustrate our results for the case when
  $\GeneralPoset = 2^ {[n]}$ is the Boolean lattice, with $n \leq 6$.
  For $n = 3$, the ideal $I_\csi$ is zero as seen in Section 2.
  For $n = 4$, the ideal     $I_{\csi}$ is the complete intersection
  of six quadrics, namely, the  determinants of the six $2 {\times} 2$-matrices  $M_{\{i,j\}}$.
  Geometrically, these    correspond to the
  six square faces of the $3$-dimensional permutahedron:
  $$
    \begin{matrix} I_{\rm csi} \, = \,\,\langle \!\! &
        p_{1243} p_{2134} -p_{1234} p_{2143} , &
        p_{1342} p_{3124} -p_{1324} p_{3142}, &
        p_{1432} p_{4123} -p_{1423} p_{4132}, & \\ & 
        p_{2341} p_{3214}-p_{2314} p_{3241},&
        p_{2431} p_{4213}-p_{2413} p_{4231},&
        p_{3421} p_{4312}-p_{3412} p_{4321} & \!\! \rangle .
    \end{matrix}
  $$
  We conclude that the \Csiszar model for $n=4$ has dimension $17$, as predicted by \ref{thm:csiszarpolytope}.   As a projective variety, this model has degree $32$ since it is a complete intersection.
    For $n = 5$, the Markov basis consists of the $2 {\times} 2$-minors
  of the ten $2 {\times} 6$-matrices $M_{\{1,2\}}, M_{\{1,3\}}, \ldots, M_{\{4,5\}}$
  and ten $6 {\times} 2$-matrices $M_{\{1,2,3\}}, M_{\{1,2,4\}}, \ldots, M_{\{3,4,5\}}$.
  For example,
  $$
    M_{\{2,4\}} \quad = \quad
    \begin{pmatrix}
      p_{24135} & p_{24153} & p_{24315} &  p_{24351} & p_{24513} & p_{24531} \\
      p_{42135} & p_{42153} & p_{42315}  & p_{42351} & p_{42513} & p_{42531}
    \end{pmatrix}.
  $$
  Altogether, these matrices  have $300$ maximal minors
  but $30$ of the minors occur in two matrices, so the total number
  of distinct Markov basis elements is $270$.
  The dimension of this model is $49$, and its degree equals $50493797160$.
  The Hilbert series of $\KK[\,p\,]/I_\csi$ equals
  \begin{small}
     $$ 
       \begin{matrix}
         (1+70 t+2215 t^2+42020 t^3+534635 t^4+4837694 t^5+32227985 t^6+161529320 t^7 \\ 
         +617560160 t^8 + 1816401720 t^9+4129171068 t^{10}+7265606880 t^{11}+9880962560 t^{12} \\ 
         +10337876480 t^{13}+8250364160 t^{14}+4953798656 t^{15}+2189864960 t^{16} \\ +688455680 t^{17} +
         145162240 t^{18}+18350080 t^{19}+1048576 t^{20})/(1 - t)^{50}.
       \end{matrix}
     $$
  \end{small}

  For $n=6$, the Markov basis is represented by
  the fifteen $2 {\times} 24$-matrices $M_{\{i,j\}}$,
  the twenty $6 {\times} 6$-matrices $M_{\{i,j,k\}}$
  and the fifteen $24 {\times} 2$-matrices $M_{\{i,j,k,l\}}$.
  Altogether, these $50$ matrices have $12780$ minors of size $2 {\times} 2$
  but only $10980$ of the binomial quadrics are distinct.

  A systematic way of understanding our matrices $M_q$
  is furnished by Sullivant's theory of {\em toric fiber products}
  \cite{Sullivant2007}. This method will become crucial when studying the
  \Bottom model in the next section and we will explain at the end of the section
  how toric fiber product can also be used to give an alternative proof of \ref{thm:csiszargroebner}.


\section{The \Bottom model}

  At the end of \cite[p.~233]{Csiszar2009a} it is asserted that a Markov 
  basis for the \Bottom model
  on $\GeneralPoset = 2^{[n]}$ can be obtained in a similar way as was done  
  for the standard \Csiszar model, but no details are given.
  However, simple examples show that it does not suffice to consider quadratic 
  binomials for the generating set and it is not clear from  \cite{Csiszar2009a}
  which properties the defining ideals of the \Bottom and \Csiszar model have 
  in common.
  The defining ideal and the model polytope of the \Bottom model seem to be 
  complicated and 
  more interesting than those of the \Csiszar model. These are the structures 
  to be explored in this section.

  Generalizing the notation introduced in the preceding section, for any subset 
  $A \subseteq \GeneralPoset$, we consider the  set of elements of $A$ that 
  cover an element from~$A$:
  $$\nabla A \,:=\, 
    \{ b \in \GeneralPoset~|~a < b \in \Cov(\GeneralPoset) 
                               \mbox{~for some~} a \in A \}.$$
  We also consider the set of elements covered by an element from $A$:
  $$\Delta A :=\,  
    \{ b \in \GeneralPoset~|~b < a \in \Cov(\GeneralPoset) 
                               \mbox{~for some~} a \in A \}.$$ 
 

  \begin{Theorem} \label{thm:bottompolytope}
    Fix a graded poset $\GeneralPoset$ of rank $n$. The model polytope
    $\BottomPol $ of the ascending model is the set of solutions in the space
      $   \RR^{|\GeneralPoset|}$, with coordinates $x_a$ for
    $a \in \GeneralPoset$, of the~equations
        \begin{eqnarray} \label{bottom:equalities}
      \sum_{a \in \GeneralPoset_i} x_a & =  \,\, 1, &\, 0 \leq i \leq n,
    \end{eqnarray}
    and the inequalities 
    \begin{eqnarray} \label{bottom:facets} 
      x_a & \geq \,\, 0, & \, a \in \GeneralPoset, \label{bottom:ineq1} \\
      -\sum_{a \in A} x_a + \sum_{a \in \nabla A} x_{a} & \geq \,\,
      0, & \, A \subseteq \GeneralPoset_i \,,\, \, 0 \leq i \leq n-1. \label{bottom:ineq2} 
    \end{eqnarray} 
  \end{Theorem}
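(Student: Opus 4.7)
My plan is to verify that the columns of the model matrix $\BottomPol$ (i.e.\ the indicator vectors of maximal chains) satisfy all of the constraints, and then to prove the reverse inclusion by a layer-by-layer network flow argument whose feasibility is governed precisely by the Hall-type inequalities \ref{bottom:ineq2}.

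The easy direction: a column of $\BottomPol$ is the $0/1$ indicator vector $\mathbf{1}_\pi$ of a maximal chain $\pi = (a_0 < a_1 < \cdots < a_n)$. Since $\pi$ meets each $\GeneralPoset_i$ in exactly one element, \ref{bottom:equalities} and \ref{bottom:ineq1} hold. For any $A \subseteq \GeneralPoset_i$ with $0 \leq i \leq n-1$, either $a_i \notin A$, in which case the left-hand side of \ref{bottom:ineq2} is $0$, or $a_i \in A$, in which case $a_{i+1} \in \nabla A$, so both $\sum_{a \in A} x_a$ and $\sum_{a \in \nabla A} x_a$ equal $1$. Hence $\BottomPol$ is contained in the polyhedron $P$ cut out by \ref{bottom:equalities} and \ref{bottom:facets}.

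For the nontrivial inclusion, let $x \in P$. For each $0 \leq i \leq n-1$ I seek non-negative edge-weights $y^{(i)}_{a<b}$ indexed by the cover relations $(a<b) \in \Cov(\GeneralPoset) \cap (\GeneralPoset_i \times \GeneralPoset_{i+1})$ satisfying
\begin{equation*}
   \sum_{b \in \nabla a} y^{(i)}_{a<b} \,=\, x_a \;\;\text{for}\;\; a \in \GeneralPoset_i, \qquad
   \sum_{a \in \Delta b} y^{(i)}_{a<b} \,=\, x_b \;\;\text{for}\;\; b \in \GeneralPoset_{i+1}.
\end{equation*}
By \ref{bottom:equalities} both marginals sum to $1$, so this is a bipartite transportation problem. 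By the classical max-flow--min-cut / K\"onig feasibility criterion, it is solvable exactly when $\sum_{a \in A} x_a \leq \sum_{b \in \nabla A} x_b$ holds for every $A \subseteq \GeneralPoset_i$, which is precisely \ref{bottom:ineq2}. The dual Hall inequalities $\sum_{b \in B} x_b \leq \sum_{a \in \Delta B} x_a$ for $B \subseteq \GeneralPoset_{i+1}$, which feasibility also requires, follow by taking $A := \GeneralPoset_i \setminus \Delta B$, noting that $\nabla A \subseteq \GeneralPoset_{i+1} \setminus B$, and invoking \ref{bottom:equalities}.

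From the layered flows $y^{(i)}$ I build a probability distribution on $\MaxChain(\GeneralPoset)$ by
\begin{equation*}
   P(\pi) \,=\, x_{a_0} \cdot \prod_{i=0}^{n-1} \frac{y^{(i)}_{a_i < a_{i+1}}}{x_{a_i}},
\end{equation*}
with the convention $0/0 = 0$, which is legitimate since $x_{a_i} = 0$ forces $y^{(i)}_{a_i < b} = 0$ for every $b$. A one-line induction on the rank shows that the marginal probability of passing through $a \in \GeneralPoset_i$ equals $x_a$, so $x = \sum_\pi P(\pi)\,\mathbf{1}_\pi$ expresses $x$ as a convex combination of vertices of $\BottomPol$, giving $P \subseteq \BottomPol$. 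The only substantive obstacle is recognizing \ref{bottom:ineq2} as the Hall condition for the transportation problem between consecutive ranks and checking the dual form by complementation; once that match is made, the rest is standard flow-decomposition.
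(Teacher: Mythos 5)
Your proof is correct, and it reaches the hard inclusion by a route that differs from the paper's in its two main ingredients. The paper argues by induction on the rank $n$: the base case $n=1$ is not proved from scratch but quoted from Ohsugi--Hibi's description of the edge polytope of a bipartite graph, and the inductive step splits $\GeneralPoset$ into $\GeneralPoset_0\cup\cdots\cup\GeneralPoset_{n-1}$ and $\GeneralPoset_{n-1}\cup\GeneralPoset_n$, writes the two projections of $\xx$ as convex combinations of chains by induction, and merges the two combinations by a greedy pairing (always splitting off the smallest coefficient) along the shared marginals on $\GeneralPoset_{n-1}$. You instead treat each pair of consecutive ranks as a transportation problem whose feasibility is exactly the Hall/Gale condition \ref{bottom:ineq2} (your max-flow--min-cut computation, plus the complementation argument for the dual inequalities, is sound because \ref{bottom:equalities} equalizes supply and demand), and then you glue all layers at once via the Markov-chain formula $P(\pi)=x_{a_0}\prod_i y^{(i)}_{a_i<a_{i+1}}/x_{a_i}$, which is a flow decomposition producing the desired convex combination. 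The underlying skeleton is the same --- reduce to the bipartite layers, where \ref{bottom:ineq2} does all the work (the paper explicitly remarks it is only used in the rank-one base case), and then stitch along the common rank marginals --- but your version is self-contained, avoids the citation and the induction, and yields an explicit probabilistic decomposition; the paper's version is shorter on the layer step at the cost of invoking \cite[Corollary 1.8(b)]{OhsugiHibi1998}, and its greedy merge is essentially a hand-run version of your flow decomposition. Two cosmetic points: in the easy direction, when $a_i\notin A$ the left-hand side of \ref{bottom:ineq2} is merely nonnegative (it equals $1$ if $a_{i+1}\in\nabla A$), not necessarily $0$; and the feasibility criterion you want to name is Gale's supply--demand theorem (or max-flow--min-cut) rather than K\"onig's theorem, though the statement you use is the correct one.
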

  
    \begin{proof}
    Equations \ref{bottom:equalities} are valid on every vertex of $\BottomPol$ because
    every maximal chain in $P$ has exactly one element of rank $i$ for all $0 \leq i \leq n$.
		The inequalities \ref{bottom:ineq2} express the fact that
		 if a maximal chain passes through an element of $A 
	\subseteq \GeneralPoset_i$ then it must also pass through a unique
	element of $\nabla A$.
	Inequalities \ref{bottom:ineq1} are obviously valid for 
	$\BottomPol$. 
	Hence $\BottomPol$ is contained in the intersection of the linear
	spaces defined by \ref{bottom:equalities} and the halfspaces defined
	by \ref{bottom:ineq1}~and~\ref{bottom:ineq2}.

    For the converse we proceed by induction on $n$.
    If $n = 0$ then $\BottomPol$ is a simplex of dimension $|Q|-1$,
    defined by  \ref{bottom:equalities} and \ref{bottom:ineq1}.
    If $n=1$ then the result is identical to~\cite[Corollary~1.8~(b)]{OhsugiHibi1998}. 

    Assume $n \geq 2$. Let $\xx = (x_a)_{a \in \GeneralPoset} \in 
    \RR^{\GeneralPoset}$ be any vector satisfying 
    \ref{bottom:equalities}, \ref{bottom:ineq1} and 
    \ref{bottom:ineq2}. Let $\xx'$ be the projection  of $\xx$ onto the 
    coordinates in  	$\GeneralPoset' = 
    \GeneralPoset_0 \cup \cdots \cup \GeneralPoset_{n-1}$ 
    and $\xx''$ the projection of $\xx$ onto
    $\GeneralPoset'' = \GeneralPoset_{n-1} \cup \GeneralPoset_n$.
    By induction, $\xx'$ and $\xx''$ lie in the model polytopes of
    the \Bottom model for $\GeneralPoset'$ and $\GeneralPoset''$. 
    Hence we can write $\xx$ and $\xx'$ as convex linear combinations:
    $$ 
      \begin{matrix}
       & \xx' &=& \sum_{c' \in \MaxChain(\GeneralPoset')} \lambda_{c'} c' &\quad \hbox{and} \quad &
       \xx'' & = &  \sum_{c'' \in \MaxChain(\GeneralPoset'')} \lambda_{c''} c''.
      \end{matrix}
    $$
    Here we identify $c'$ and $c''$ with the $0/1$-vector that has support
    $c'$ and $c''$ respectively.

    Consider a fixed element $a \in \GeneralPoset_{n-1}$. Let
    $c_1', \ldots, c_r'$ be the chains from the above expansion of
    $\xx'$ that contain $A$ and for which $\lambda_{c'} > 0$. Let 
    $c_1'', \ldots, c_s''$ be the chains from the above expansion of
    $\xx''$ that contain $a$ and for which $\lambda_{c''} > 0$.
    The coordinate $x_a'$ of $\xx'$ then equals
    $\sum \lambda_{c_i'}$ and the coordinate $x_a''$ of $\xx''$ equals
    $\sum \lambda_{c_i''}$. Since $x_a'$ and $x_a''$ coincide with
    the coordinate $x_a$ of $\xx$, we have  $\sum \lambda_{c_i'} = \sum \lambda_{c_i''}$. 
    After relabeling
    (and possibly swapping $\xx'$ and $\xx''$) we may assume that $\lambda_{c_1'}$ is
    the minimum of $\{\lambda_{c_1'} , \ldots,
    \lambda_{c_r'}, \lambda_{c_1''}, \ldots, \lambda_{c_s''}\}$.
    Then we replace $\lambda_{c_1''}$ by
    $\lambda_{c_1''}- \lambda_{c_1'}$. Let 
    $c_1 \in \MaxChain(\GeneralPoset)$ be the concatenation of
    $c_1'$ and $c_1''$. Now set $\lambda_{c_1} = \lambda_{c_1'}$ and
    proceed with the new coefficients and the chains
    $c_2', \ldots, c_r'$ and $c_1'', \ldots, c_s''$. 
    Clearly the sums of the coefficients of $c_2', \ldots, c_r'$ 
    and $c_1'', \ldots, c_s''$ still coincide. Proceeding by induction 
    and  summing over all $a \in \GeneralPoset_{n-1}$ for which $\xx_a > 0$,
    one constructs an expansion $\sum \lambda_ic_i$ in terms of
    chains in $\MaxChain(\GeneralPoset)$ whose projection onto
    $M(\GeneralPoset')$ equals $\xx'$ and whose projection onto
    $M(\GeneralPoset'')$ equals $\xx''$. 
    Hence $\xx = \sum \lambda_ic_i$, and we have
    $\lambda_i \geq 0$ and $\sum \lambda_i =
    \sum_{ a \in \GeneralPoset_{n-1}} x_a = 1$ by \ref{bottom:equalities}.
    This proves that $\xx \in \BottomPol$. 
  \end{proof}

  In the preceding proof, when showing that 
  any $\xx$ satisfying \ref{bottom:equalities}--\ref{bottom:ineq2} lies in $\BottomPol$,
    we use \ref{bottom:ineq2} only in the induction   base $n=1$. 
  The equations \ref{bottom:equalities} are complete and independent
  when $Q = 2^{[n]}$ is the Boolean lattice, so in that case the dimension
  of the model polytope $\BottomPol$ is equal to $2^n - n - 1$.
  In general the dimension is more subtle to calculate and we do not know
  any good description. For example if the induced subposet of 
  $\GeneralPoset$ on the elements of two consecutive ranks $i$ and $i+1$
  is disconnected then  $\BottomPol$ is contained in each hyperplane
  defined by the equality of the sum over the variables of rank $i$ and
  $i+1$ in a component.

  Now we turn to the toric ideal $I_{\bott}$   of the \Bottom model. It is the kernel of the map
  \begin{equation}
  \label{eq:tmap}
  \KK[ \,p\,] \rightarrow \KK[\,t \,] ,\,\, p_\pi \,\, \mapsto \,\,
        t_{a_0} t_{a_1} \cdots t_{a_n} \quad
        \text{for} \,\,\pi = (a_0 < \cdots < a_n) \in \MaxChain(\GeneralPoset) . 
        \end{equation}
  If  $\rk(\GeneralPoset) = 0$ then this map is injective and
  $I_{\bott} = \{0\}$, so we assume    $\rk(\GeneralPoset) \geq 1$ from now on.
  The case  $\rk(\GeneralPoset) =1$ serves as the base case 
  for our inductive constructions.   Here the poset $Q$ is identified with 
  a bipartite graph on $Q_0$ and $Q_1$, and the monomial map
  $p_\pi \mapsto t_{a_0} t_{a_1}$  defines the  toric ring 
  associated with a bipartite graph in commutative algebra.
  A generating set of the kernel of this map was determined 
  in \cite[Lemma 1.1]{OhsugiHibi1999} and shown to be a universal 
  Gr\"obner basis in \cite[Proposition 8.1.10]{Villerreal2001}. 
  This result has already proven to be useful in algebraic statistics 
(see e.g.~\cite{FienbergPetrovicRinaldo2010}). 
  
  \begin{Lemma}[Ohsugi-Hibi \cite{OhsugiHibi1999}, Villerreal \cite{Villerreal2001}]
      \label{lem:bottomrank1}
    Let $\GeneralPoset$ be a graded poset of rank $1$. Then a universal Gr\"obner basis
    of the toric ideal $I_{\bott}$
      consists of all cycles in $Q$, expressed as binomials 
	$$p_{a_0 < a_1} p_{a_2 < a_3} \cdots p_{a_{2s-2} < a_{2s-1}}
	 \,-\, p_{a_2 < a_1} p_{a_4 < a_3} \cdots p_{a_{2s} < a_{2s-1}}$$
	where $a_{2s} = a_0$ and the $a_i$ are pairwise distinct otherwise. 
  \end{Lemma}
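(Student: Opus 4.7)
The plan is to identify $\GeneralPoset$, since it has rank $1$, with the bipartite graph $G$ whose vertex set is $Q_0 \sqcup Q_1$ and whose edges are the cover relations $(a_0,a_1) \in \Cov(\GeneralPoset)$. Under this identification, the map \ref{eq:tmap} specializes to $p_e \mapsto t_{a_0} t_{a_1}$ for $e = (a_0 < a_1)$, which is precisely the standard parametrization of the toric edge ring of a bipartite graph. I would then invoke the classical theory of toric ideals of bipartite graphs, due to Ohsugi--Hibi and Villarreal, of which this lemma is the punch line.

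First, I would verify that each cycle binomial of the stated form lies in $I_\bott$: the alternating cycle $a_0, a_1, \ldots, a_{2s-1}, a_0$ visits each of its vertices exactly once, so both products on either side of the binomial yield $\prod_{i=0}^{2s-1} t_{a_i}$ under \ref{eq:tmap}. Next, to see that these cycle binomials generate $I_\bott$, I would use the standard fact that the integer kernel of the vertex--edge incidence matrix of $G$ is generated by the signed characteristic vectors of its cycles; this is a classical result about totally unimodular matrices coming from bipartite graphs.

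The substantive content is that the cycle binomials form a \emph{universal} Gr\"obner basis. My strategy is to show they coincide with the Graver basis of $I_\bott$. A Graver basis element is a primitive binomial $p^u - p^v \in I_\bott$, meaning $\supp(u) \cap \supp(v) = \emptyset$ and no proper subbinomial $p^{u'} - p^{v'}$ with $u' \leq u$, $v' \leq v$ componentwise lies in $I_\bott$. Given such a primitive binomial, $u$ and $v$ describe two edge multisets in $G$ with the same vertex-degree sequence; their disjoint union forms an Eulerian multigraph that decomposes into closed walks, each of even length because $G$ is bipartite. Primitivity forces this decomposition to consist of a single simple cycle, since otherwise extracting one cycle would produce a strictly smaller binomial in $I_\bott$ dominated by $p^u - p^v$. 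Hence every Graver basis element has the stated cycle form, and since the universal Gr\"obner basis is always contained in the Graver basis, the lemma follows.

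The main technical obstacle is the cycle-extraction step, where bipartiteness is essential: in a non-bipartite graph, an odd closed walk can force an edge variable to appear on both sides of the resulting binomial, violating the disjoint-support condition and obstructing the reduction argument. Keeping the alternation between $Q_0$ and $Q_1$ visible throughout the walk decomposition is what makes the primitivity-to-cycle correspondence work.
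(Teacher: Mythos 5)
The paper itself gives no proof of this lemma: after making exactly your identification of a rank-$1$ graded poset with a bipartite graph on $\GeneralPoset_0 \sqcup \GeneralPoset_1$ and of $I_{\bott}$ with the toric ideal of its edge ring, it simply cites Ohsugi--Hibi (generation by cycles) and Villarreal (universal Gr\"obner basis). Your overall strategy of supplying the proof behind those citations via the Graver basis is the standard one, and the logical skeleton is sound: cycle binomials lie in $I_{\bott}$, and if they contain the Graver basis then they contain the universal Gr\"obner basis, hence form one.

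The gap is in the extraction step. Decomposing the Eulerian multigraph on the edge multiset $u \cup v$ into closed walks or edge-disjoint simple cycles does not by itself produce sub-binomials of $p^u - p^v$ lying in $I_{\bott}$: nothing prevents an extracted cycle from lying entirely inside $\supp(u)$ (one side of a primitive binomial may well contain a cycle of the graph), and such a cycle yields no relation $p^{u'}-p^{v'}\in I_{\bott}$ with $u'\le u$, $v'\le v$, so primitivity cannot be played against it. The alternation that matters is not the alternation between $\GeneralPoset_0$ and $\GeneralPoset_1$ but the alternation between $u$-edges and $v$-edges along the walk; your closing remark conflates the two. The repair is standard: since $Au=Av$, every vertex has equal $u$-degree and $v$-degree, so one can grow a closed walk whose edges come alternately from $u$ and from $v$; for such a walk the two colours balance at each vertex, so it does give a sub-binomial in $I_{\bott}$, and primitivity forces it to exhaust $u$ and $v$; bipartiteness then guarantees that a shortest such alternating closed walk is a simple (even) cycle, whereas in a non-bipartite graph one can be left with two odd cycles meeting in a vertex, which is exactly where non-cycle primitive binomials arise. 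Alternatively you can sidestep the walk argument: the incidence matrix of a bipartite graph is unimodular, and for unimodular configurations the circuits, the Graver basis and the universal Gr\"obner basis coincide (Sturmfels, \emph{Gr\"obner Bases and Convex Polytopes}), the circuits being precisely the cycle binomials. A minor further point: generation of the integer kernel by signed cycle vectors does not by itself give generation of the toric ideal (a lattice basis generates only up to saturation), but this is harmless since the Graver-basis argument subsumes generation.
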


  Lemma 1.1 in \cite{OhsugiHibi1999} and Proposition 8.1.10 in \cite{Villerreal2001} 
  is actually formulated in a slightly different language. 
  For a graph $G = (V,E)$ with vertex set $V$ and edge set $E$ one considers
  two polynomial rings, one where the variables are indexed by the edges of the graph 
  and one where the variables are indexed by the vertices. Now the edge variables are mapped
  to the product of the corresponding vertex variables. If the graph is bipartite with
  bipartition $V = V_0 \cup V_1$ then one can consider it as a graded poset of
    rank $1$ where vertices from $V_0$ are covered by their neighbors in $V_1$.
    Of course, the role of $V_0$ and $V_1$ can also be reversed.
  Thus the edge variables represent 
  variables indexed by the maximal chains,
   and the kernel of the map to the corresponding product of vertices 
  coincides with the toric ideal $I_{\bott}$. 

  Now we are in a position to describe a Gr\"obner basis for $I_{\bott}$
  when ${\rm rank}(Q) \geq 1$.

  \begin{Theorem}
    \label{thm:bottomideal}
    A Gr\"obner basis for the toric ideal $I_{\bott}$ of the \Bottom model
    on a graded poset $\GeneralPoset$ of rank $n$ 
    is given by two classes of binomials.
    The first class consists of the quadrics
        \begin{equation}
          \label{eq:bottomquadric}
          p_{\pi_1} \cdot p_{\pi_2} \,-\, p_{\bar{\pi}_1} \cdot p_{\bar{\pi}_2}, 
          \end{equation}
	  where $\pi_1,\bar{\pi}_1, \pi_2, \bar{\pi}_2$ are distinct chains of at least three elements,
          such that $\pi_1 \cup \pi_2 = \bar{\pi}_1 \cup \bar{\pi}_2$ as multisets and 
          $\pi_1 \cap \pi_2 = \bar{\pi}_1 \cap \bar{\pi}_2$ is nonempty. The second class
          consists of all binomials
        \begin{equation}
          \label{eq:bottompath}
          p_{\pi_1} p_{\pi_2} \cdots p_{\pi_s}\, -\, p_{\bar{\pi}_1} p_{\bar{\pi}_2} \cdots p_{\bar{\pi}_s}, 
        \end{equation}
        where $\pi_1,\bar{\pi}_1 , \ldots, \pi_s,\bar{\pi}_s$ are constructed as follows: Choose 
        $i \in \{0,1,\ldots,n{-}1\}$ and take any cycle 
        $\gamma = (a_0 {<} a_1 {>} a_2 {<} \cdots {<} a_{2m-1} {>} a_{2m} {=} a_0)$ in the subposet
        $\GeneralPoset_{i,i+1}$ of all elements having  rank $i$ or $i+1$ in $\GeneralPoset$.
        Then the maximal chains $\pi_j, \bar{\pi}_j$ for 
        $ 0 \leq j \leq s$ are chosen such that 
	$$ \begin{matrix}
	& \pi_j &=& (  \,u_{j,0} = \bar{u}_{j,0} < \cdots < u_{j,i} = \bar{u}_{j,i} = a_{2j} <  a_{2j+1} = u_{j,i+1}< \cdots < u_{j,n} \,)\\
	\text{and}\,\, &
	\bar{\pi}_j & =& \,(\,u_{j,0} = \bar{u}_{j,0} < \cdots < u_{j,i} = \bar{u}_{j,i} = a_{2j} <  a_{2j-1} =  \bar{u}_{j,i+1}< \cdots < \bar{u}_{j,n}\,)
	\end{matrix}
	$$
	and  the multisets $\{ u_{j,\ell} \,|\, 0 \leq j \leq s, i \leq \ell \leq n\}$   
        and $\{ \bar{u}_{j,\ell}\,  |\,
        0 \leq j \leq s, i \leq \ell \leq n\}$ coincide. 
  \end{Theorem}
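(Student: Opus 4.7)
The plan is to proceed by induction on $n = \rk(\GeneralPoset)$, combining Sullivant's theory of toric fiber products \cite{Sullivant2007} with \ref{lem:bottomrank1} as the base case. First, I would verify directly that every binomial in \ref{eq:bottomquadric} and \ref{eq:bottompath} belongs to $I_\bott$. For \ref{eq:bottomquadric} this is immediate from the multiset identity $\pi_1 \cup \pi_2 = \bar\pi_1 \cup \bar\pi_2$. For \ref{eq:bottompath} I would compare rank-level contributions: for any rank $\ell \notin \{i,i+1\}$ the contributions on both sides agree by the hypothesis that the multisets of the $u_{j,\ell}$ and the $\bar{u}_{j,\ell}$ coincide; at rank $i$ both sides collect $\{a_{2j} : 1 \leq j \leq s\}$; and at rank $i+1$ both sides collect the same multiset of "odd" cycle elements, equal by cyclicity of $\gamma$ using $a_{2s}=a_0$.

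The base case $n = 1$ is exactly \ref{lem:bottomrank1}: maximal chains have only two elements, so \ref{eq:bottomquadric} is vacuous (the statement requires chains of at least three elements), and \ref{eq:bottompath} for $i=0$ recovers the universal Gröbner basis of cycle binomials for the bipartite graph $\GeneralPoset_0 \cup \GeneralPoset_1$. For $n \geq 2$, I would split $\GeneralPoset = \GeneralPoset' \cup \GeneralPoset''$ with $\GeneralPoset' = \GeneralPoset_0 \cup \cdots \cup \GeneralPoset_{n-1}$ of rank $n-1$ and $\GeneralPoset'' = \GeneralPoset_{n-1} \cup \GeneralPoset_n$ of rank $1$. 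Maximal chains in $\GeneralPoset$ biject with pairs $(\pi',\pi'')$ of maximal chains in the two slabs sharing a common rank-$(n{-}1)$ element, and this exhibits $\KK[\,p\,]/I_\bott$ as the toric fiber product of the two slab coordinate rings over the interface ring $\KK[t_a : a \in \GeneralPoset_{n-1}]$, with the natural multigrading indexed by the shared element.

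Sullivant's Gröbner basis theorem for toric fiber products then supplies, under compatible term orders, a Gröbner basis of $I_\bott$ consisting of (i) lifts of a Gröbner basis of $I_{\bott,\GeneralPoset'}$, (ii) lifts of a Gröbner basis of $I_{\bott,\GeneralPoset''}$, and (iii) quadratic "gluing" binomials that swap the upper-slab parts of two chains sharing their rank-$(n{-}1)$ element. Matching against the claimed generators: the induction hypothesis lifts to exactly those \ref{eq:bottomquadric} whose shared element has rank $<n-1$ together with every \ref{eq:bottompath} for $i<n-1$; the base case applied to $\GeneralPoset''$ lifts to every \ref{eq:bottompath} with $i=n-1$; and the interface quadrics are exactly those \ref{eq:bottomquadric} whose shared element lies in $\GeneralPoset_{n-1}$. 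Together these three families exhaust the proposed basis.

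The main obstacle is verifying the compatibility hypotheses of Sullivant's theorem, in particular exhibiting a single term order on $\KK[\,p\,]$ that restricts to Gröbner-basis-inducing orders on both slabs simultaneously and with respect to which the interface gluing quadrics reduce S-pairs correctly against the lifted bases. I would use a block-lexicographic order derived from a fixed linear extension of $\GeneralPoset$ applied coordinatewise to the tuples $(a_0,\ldots,a_n)$ that encode maximal chains; this is compatible with the inductive hypothesis on the lower slab and with the universal Gröbner property of \ref{lem:bottomrank1} on the upper slab. A secondary subtlety is that a quadric \ref{eq:bottomquadric} may admit several candidate shared elements; to avoid redundancy and ensure each such quadric appears exactly once across the inductive and gluing contributions, I would canonicalize by always distinguishing the shared element of maximal rank.
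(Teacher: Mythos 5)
Your proposal follows essentially the same route as the paper: induction on the rank, splitting off the top slab $\GeneralPoset'' = \GeneralPoset_{n-1} \cup \GeneralPoset_n$, realizing $I_{\bott}$ as a toric fiber product over the linearly independent grading by the shared rank-$(n{-}1)$ element, and obtaining the Gr\"obner basis from Sullivant's lifting theorem plus the interface gluing quadrics, with \ref{lem:bottomrank1} as the base case. The "main obstacle" you flag is not actually an obstacle: Sullivant's Theorem 12 only requires Gr\"obner bases of the two factor ideals with respect to arbitrary term orders and itself produces a suitable order on the fiber product, so no block-lexicographic construction or canonicalization of the shared element is needed, exactly as in the paper's argument.
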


  In \ref{fig:ascend} we give a visual description of the 
  binomial \ref{eq:bottompath}.

    \begin{figure}
      \begin{picture}(0,0)%
        \includegraphics{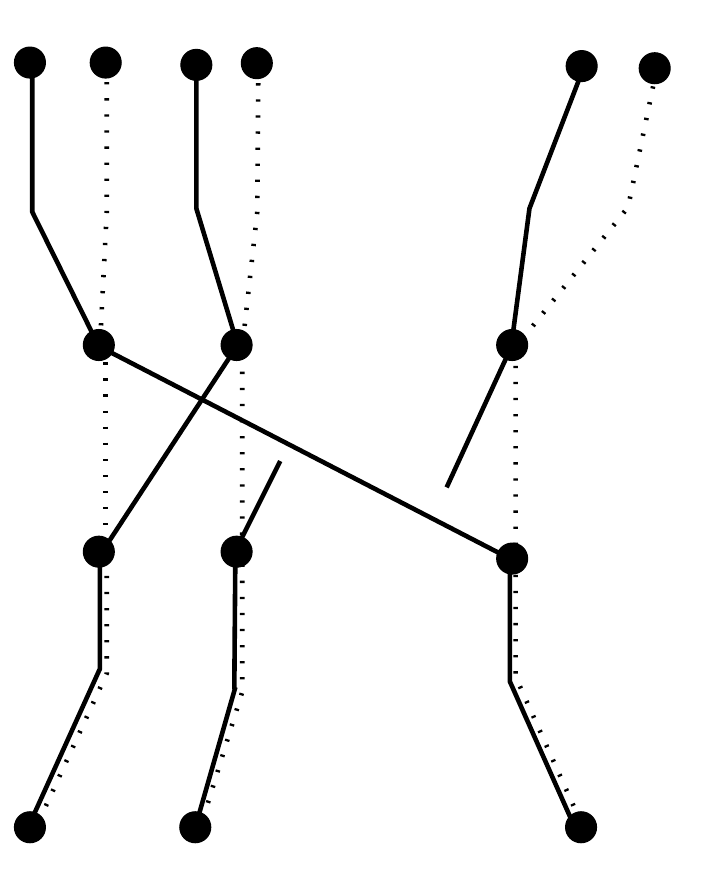}%
        \end{picture}%
        \setlength{\unitlength}{2901sp}%
        \begingroup\makeatletter\ifx\SetFigFont\undefined%
          \gdef\SetFigFont#1#2#3#4#5{%
          \reset@font\fontsize{#1}{#2pt}%
          \fontfamily{#3}\fontseries{#4}\fontshape{#5}%
          \selectfont}%
        \fi\endgroup%
        \begin{picture}(4046,5779)(1156,-6740)
          \put(1200,-1100){\makebox(0,0)[lb]{\smash{{\SetFigFont{8}{9.6}{\rmdefault}{\mddefault}{\updefault}{\color[rgb]{0,0,0}$u_{0n}$}}}}}
          \put(1771,-1100){\makebox(0,0)[lb]{\smash{{\SetFigFont{8}{9.6}{\rmdefault}{\mddefault}{\updefault}{\color[rgb]{0,0,0}$\bar{u}_{0n}$}}}}}
          \put(2300,-1100){\makebox(0,0)[lb]{\smash{{\SetFigFont{8}{9.6}{\rmdefault}{\mddefault}{\updefault}{\color[rgb]{0,0,0}$u_{1n}$}}}}}
          \put(2800,-1100){\makebox(0,0)[lb]{\smash{{\SetFigFont{8}{9.6}{\rmdefault}{\mddefault}{\updefault}{\color[rgb]{0,0,0}$\bar{u}_{1n}$}}}}}
          \put(4800,-1100){\makebox(0,0)[lb]{\smash{{\SetFigFont{8}{9.6}{\rmdefault}{\mddefault}{\updefault}{\color[rgb]{0,0,0}$u_{sn}$}}}}}
          \put(5363,-1100){\makebox(0,0)[lb]{\smash{{\SetFigFont{8}{9.6}{\rmdefault}{\mddefault}{\updefault}{\color[rgb]{0,0,0}$\bar{u}_{sn}$}}}}}
          \put(0900,-4700){\makebox(0,0)[lb]{\smash{{\SetFigFont{8}{9.6}{\rmdefault}{\mddefault}{\updefault}{\color[rgb]{0,0,0}$a_0=a_{2s}$}}}}}
          \put(2350,-4700){\makebox(0,0)[lb]{\smash{{\SetFigFont{8}{9.6}{\rmdefault}{\mddefault}{\updefault}{\color[rgb]{0,0,0}$a_2$}}}}}
          \put(4650,-4700){\makebox(0,0)[lb]{\smash{{\SetFigFont{8}{9.6}{\rmdefault}{\mddefault}{\updefault}{\color[rgb]{0,0,0}$a_{2s-2}$}}}}}
          \put(1000,-3200){\makebox(0,0)[lb]{\smash{{\SetFigFont{8}{9.6}{\rmdefault}{\mddefault}{\updefault}{\color[rgb]{0,0,0}$a_{2s-1}$}}}}}
          \put(2350,-3200){\makebox(0,0)[lb]{\smash{{\SetFigFont{8}{9.6}{\rmdefault}{\mddefault}{\updefault}{\color[rgb]{0,0,0}$a_1$}}}}}
          \put(4650,-3200){\makebox(0,0)[lb]{\smash{{\SetFigFont{8}{9.6}{\rmdefault}{\mddefault}{\updefault}{\color[rgb]{0,0,0}$a_{2s-3}$}}}}}
          \put(0900,-6700){\makebox(0,0)[lb]{\smash{{\SetFigFont{8}{9.6}{\rmdefault}{\mddefault}{\updefault}{\color[rgb]{0,0,0}$u_{00} = \bar{u}_{00}$}}}}}
          \put(2000,-6700){\makebox(0,0)[lb]{\smash{{\SetFigFont{8}{9.6}{\rmdefault}{\mddefault}{\updefault}{\color[rgb]{0,0,0}$u_{10} = \bar{u}_{10}$}}}}}
          \put(4500,-6700){\makebox(0,0)[lb]{\smash{{\SetFigFont{8}{9.6}{\rmdefault}{\mddefault}{\updefault}{\color[rgb]{0,0,0}$u_{s0} = \bar{u}_{s0}$}}}}}
      \end{picture}%
      \caption{A binomial in the Gr\"obner basis of the \Bottom model}
      \label{fig:ascend}
    \end{figure}
    
  For the proof of this result we shall employ Sullivant's
  theory of {\em toric fiber products} from~\cite{Sullivant2007}. We briefly review that theory. Consider two 
  polynomial rings $\KK[\,p'\,]$ and $\KK[\,p''\,]$ and a surjective multigrading $\phi : \{ p' \} \cup \{p''\} 
  \rightarrow \aA \subseteq \RR^d$, called the {\em $\aA$-grading}. 
  Then choose new variables $z_{\pi,\tau}$ for all $\pi \in \{ p'\}$ and 
  $\tau \in \{p''\}$ such
  that $\phi(\pi) = \phi(\tau)$. For ideals $I$ in $\KK[\,p'\,]$ and $J$ in 
  $\KK[\,p''\,]$ that are $\aA$-homogeneous,  we let $I \times_{\aA} J$ denote
  the kernel of the map $z_{\pi,\tau} \mapsto p'_\pi \otimes p''_\tau$
  from $\KK[\, z\,]$ to the tensor product $\KK[\,p'\,]/I \otimes \KK[\,p''\,]/J$.
   
  In order to describe a Gr\"obner basis of $I \times_{\aA} J$ in terms of Gr\"obner bases of $I$ and $J$,
  the concept of lifting monomials turns out to be crucial \cite[p. 567]{Sullivant2007}.
  A lift of a variable $p_{\pi}'$ is $z_{\pi\tau}$ for some $\tau$ with $\phi(\pi) = \phi(\tau)$. 
  Now assume that $\aA$ is linearly independent.
  Let $f \in \KK[\, p'\,]$ be an $\aA$-homogeneous  polynomial. Each monomial $m$ in $f$  factors as 
  $m_{a_1} \ldots m_{a_r}$ where $\aA = \{ a_1, \ldots , a_r\}$ and 
  $\phi(m_{a_i}) = \deg(m_{a_i}) \, a_i$. Moreover, since $\aA$ is linearly independent,
  each monomial $m$ in $f$ gives
  the same number $d_i := \deg(m_{a_i})$ of variables of degree $a_i$ (counted with multiplicity).
  Now choose a multisets of $d_i$ variables $p''$ of degree $a_i$. 
  A lift of $f$ is then any polynomial obtained from the above choices when lifting the variables
  in each monomial from $f$ in such a way that for all monomials the chosen multisets are exhausted.

  \begin{proof}
    We proceed by induction on $n = {\rm rank}(Q)$.
    If $n=1$ then \ref{eq:bottomquadric} describes an empty set of binomials and 
    the set in \ref{eq:bottompath} coincides with the Gr\"obner basis
    given in \ref{lem:bottomrank1}.

    Now assume $n \geq 2$. As in the proof of \ref{thm:bottompolytope}  
    we split  $\GeneralPoset$  into the subposet
    $\GeneralPoset' = \GeneralPoset_0 \cup \cdots \cup \GeneralPoset_{n-1}$ 
    consisting of ranks $0, \ldots, n-1$ and 
    the bipartite poset $\GeneralPoset'' = \GeneralPoset_{n-1} \cup
    \GeneralPoset_n$ consisting of ranks $n-1$ and $n$. Assume 
    $\GeneralPoset_{n-1} = \{a_1, \ldots, a_r\}$. 
    Any chain in $\MaxChain(\GeneralPoset')$ ends in an element
    from $\GeneralPoset_{n-1}$, and any chain from $\MaxChain(\GeneralPoset'')$
    starts in an element from $\GeneralPoset_{n-1}$. We consider the
    polynomial ring $\KK[\,p'\,]$ with variables $p_\pi'$ for $\pi \in 
    \MaxChain(\GeneralPoset')$ and $\KK[\,p''\,]$ with variables $p_\pi''$ for $\pi \in
    \MaxChain(\GeneralPoset'')$.
    Then we grade $p_\pi'$ by $e_i \in \RR^r$ if $\pi$ ends in $a_i$ and
    $p_c''$ by $e_i \in \RR^r$ if $\pi$ begins in $a_i$. Note that
    the set of degrees $\aA = \{e_1, \ldots,     e_r\}$
    is linearly independent.

    We write $I_{\bott}'$ for the ideal of the
    \Bottom model of $\GeneralPoset'$ and $I_{\bott}''$ for the ideal of the
    \Bottom model of $\GeneralPoset''$.
    The toric ideal of interest to us is the fiber product $\,I_{\bott} = I_{\bott}' \times_{\aA}
    I_{\bott}''$.  Since $\aA$ is linear independent, we can  apply
    \cite[Theorem 12]{Sullivant2007} and the induction hypothesis to prove the claim.
    Sullivant's result tells us that a    Gr\"obner basis of $I_{\bott}$
    can be found by lifting Gr\"obner bases of the ideals $I_{\bott}'$ and 
    $I_{\bott}''$ and by adding some quadratic relations.  

    By induction, $I_{\bott}'$ has a Gr\"obner basis $\mathcal{G}'$
    consisting of elements   \ref{eq:bottomquadric} and \ref{eq:bottompath}.
    We shall lift these to binomials in $I_{\bott}$. 
    Likewise, $I_{\bott}''$ has a Gr\"obner basis $\mathcal{G}''$
    consisting of elements \ref{eq:bottompath}.
    There are no binomials of type \ref{eq:bottomquadric} in $I_{\bott}''$
    because the poset $\GeneralPoset''$ has only rank $1$.

\smallskip

    \noindent {\sf Lifting \ref{eq:bottomquadric}:}
    Let $p_{\pi_1}  p_{\pi_2} \,-\, p_{\bar{\pi}_1}  p_{\bar{\pi}_2}$ 
    be a quadric \ref{eq:bottomquadric} in $\mathcal{G}'$.
    Since it is $\aA$-homogeneous,    the
    multisets of endpoints of $\pi_1, \pi_2$ and $\bar{\pi}_1, \bar{\pi}_2$ coincide.
    Suppose $\pi_1$ and $\bar{\pi}_1$ have the same endpoint.
    In the lifting described above we need to distinguish two cases.
        
    {\em Case 1}: $\pi_1$ and $\pi_2$ end in different endpoints. Then, 
    for any two maximal chains $\tau_1, \tau_2$ in $\GeneralPoset''$ starting in the 
    endpoints of $\pi_1$ and $\pi_2$ respectively, the unique lift for these 
    choices is 
    \begin{equation}
        \label{eq:firstlift} 
        p_{\pi_1 \tau_1} \cdot p_{\pi_2\tau_2} \,-\, p_{\bar{\pi}_1\tau_1} \cdot p_{\bar{\pi}_2 \tau_2}
        \quad \in \,\, I_{\bott}.
    \end{equation}
    
    {\em Case 2}: $\pi_1$ and $\pi_2$ end in the same endpoint. Then, for any 
    two chains $\tau_1, \tau_2$ in $\GeneralPoset''$ starting in the common endpoint 
    of $\pi_1$ and $\pi_2$, besides the lift \ref{eq:firstlift} we also have the    
    lift 
    \begin{equation}
        \label{eq:secondlift} 
        p_{\pi_1 \tau_1} \cdot p_{\pi_2 \tau_2} \,-\, p_{\bar{\pi}_1 \tau_2} \cdot p_{\bar{\pi}_2 \tau_1}
        \quad \in \,\, I_{\bott}.
    \end{equation}
    One easily checks that the binomials from \ref{eq:firstlift} and \ref{eq:secondlift} 
    satisfy the conditions from \ref{eq:bottomquadric}.

\smallskip

    \noindent {\sf Lifting \ref{eq:bottompath}:}
    First consider a       binomial 
         $p_{\pi_1} \cdots p_{\pi_s} - p_{\bar{\pi}_1} \cdots p_{\bar{\pi}_s}$ 
         of type \ref{eq:bottompath} in the Gr\"obner basis $\mathcal{G}'$. 
         Since it is 
         $\aA$-homogeneous, the multisets 
         $\{ \phi(\pi_1) , \ldots, \phi(\pi_s)\}$ and 
         $\{ \phi(\bar{\pi}_1) , \ldots, \phi(\bar{\pi}_s)\}$ coincide. Now choose 
         maximal chains $\pi_1'' , \ldots, \pi_s''$ from $\GeneralPoset''$
         with the same multiset of $\aA$-degrees 
         $\{ \phi(\pi_1'') , \ldots, \phi(\pi_s'')\}$. 
         Note that the $\pi_i''$ are just single cover relations.
         For any $\gamma \in \symm_s$ such that $\phi(\bar{\pi}_j) = \phi(\pi_{\gamma(j)}'')$, the binomial 
         $$p_{\pi_1\pi_1''} \cdots p_{\pi_s\pi_s''}\, -\, p_{\bar{\pi}_1\pi_{\tau(1)}''} \cdots p_{\bar{\pi}_s\pi_{\gamma(s)}''}$$
         lies in  $I_{\bott}$ and is of type \ref{eq:bottompath}.

We next consider
      a binomial 
         $p_{\pi_1} \cdots p_{\pi_s} - p_{\bar{\pi}_1} \cdots p_{\bar{\pi}_s}$ 
         of type \ref{eq:bottompath} in the Gr\"obner basis $\mathcal{G}''$. 
The proof is analogous to the previous case, but
                   the multiset of
         $\aA$-degree $\{ \phi(\pi_1) , \ldots, \phi(\pi_s)\} $ $= 
         \{ \phi(\bar{\pi}_1) , \ldots, \phi(\bar{\pi}_s)\}$ here is actually a set.
         Choosing a set $\{\pi_1',\ldots, \pi_s'\}$ of maximal chains
         from $\GeneralPoset'$ for which 
         $\{ \phi(\pi_1) , \ldots, \phi(\pi_s)\}$ and 
         $\{ \phi(\pi_1') , \ldots, \phi(\pi_s')\}$ coincide leads to a
         unique lift
         $$p_{\pi_1''\pi_1} \cdots p_{\pi_s''\pi_s}\, -\, p_{\pi_1''\bar{\pi}_{1}} \cdots p_{\pi_s''\bar{\pi}_{s}}$$
         is $I_{\bott}$ of type \ref{eq:bottompath}.
    All the binomials constructed by these liftings from $\mathcal{G}'$ and
    $\mathcal{G''}$ are among the
    binomials described in \ref{eq:bottomquadric} and \ref{eq:bottompath} for
    the ideal $I_{\bott}$ we seek to generate.

    Finally, we add the quadratic binomials 
    $p_{\pi_1'\pi_1''} p_{\pi_2'\pi_2''} - p_{\pi_1'\pi_2''} p_{\pi_2'\pi_1''}$ 
    for all maximal chains
    $\pi_1', \pi_2' \in \MaxChain(\GeneralPoset')$ and $\pi_1'', \pi_2'' \in \MaxChain(\GeneralPoset'')$ whose
    $\aA$-degrees coincide. These binomials lie in $I_{\bott}$ and they have  type \ref{eq:bottomquadric}.

    We have shown that the lifting of the Gr\"obner bases $\mathcal{G}'$ for $I_{\bott}'$ and
    $\mathcal{G}''$ for $I_{\bott}''$ plus the additional quadrics are
    a subset of the binomials described in  \ref{eq:bottomquadric} and \ref{eq:bottompath}.
    Using \cite[Theorem 12]{Sullivant2007}, we conclude that the binomials
    from  \ref{eq:bottomquadric} and \ref{eq:bottompath} form a Gr\"obner basis of $I_{\bott}$. 
    Actually, the following converse is true as well:
    all binomials \ref{eq:bottomquadric} and \ref{eq:bottompath} in $I_{\bott}$
    arise from $I_{\bott}'$ and $I_{\bott}''$ using the lifting procedure we described.
  \end{proof}

  \begin{Corollary}
    The toric algebra $\KK[\,p\,] / I_{\bott}$ is  normal and Cohen-Macaulay.
  \end{Corollary}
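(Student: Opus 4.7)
The plan is to mimic the argument used for the analogous corollary of the \Csiszar model. By \ref{thm:bottomideal} the toric ideal $I_{\bott}$ admits a Gr\"obner basis consisting of the binomials \ref{eq:bottomquadric} and \ref{eq:bottompath}. All of these binomials are differences of two squarefree monomials, so for any term order that realizes this as a Gr\"obner basis the initial ideal is squarefree.

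Once a squarefree initial ideal is in hand, normality of the toric algebra $\KK[\,p\,]/I_{\bott}$ follows from standard commutative algebra, namely \cite[Proposition 13.15]{Sturmfels1996} (cf.\ also \cite[Corollary 8.9]{Sturmfels1996}): a lattice polytope whose toric ideal possesses a squarefree initial ideal admits a regular unimodular triangulation, and hence its associated semigroup algebra is normal. This is exactly the step used in the proof of the corresponding statement for the \Csiszar model, so the argument transfers verbatim once \ref{thm:bottomideal} is invoked.

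Finally, Hochster's Theorem \cite[Theorem 1]{Hochster1972} asserts that every normal affine semigroup algebra is Cohen-Macaulay. Applying this to $\KK[\,p\,]/I_{\bott}$ completes the proof.

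I expect no serious obstacle: all the heavy lifting has been done in the explicit Gr\"obner basis description of \ref{thm:bottomideal}. The only point that deserves a brief sentence is the observation that the binomials \ref{eq:bottomquadric} and \ref{eq:bottompath} are, by construction, differences of squarefree monomials, so that the leading terms under any realizing monomial order form a squarefree monomial ideal.
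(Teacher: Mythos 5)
Your proposal is correct and follows essentially the same route as the paper: the Gr\"obner basis of \ref{thm:bottomideal} consists of differences of squarefree monomials, so the initial ideal is squarefree, which gives normality (via the standard results in \cite{Sturmfels1996}), and Cohen-Macaulayness then follows from Hochster's Theorem \cite[Theorem 1]{Hochster1972}. The paper's proof is just a more compressed statement of exactly this argument.
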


  \begin{proof}
    \ref{thm:bottomideal} gave a Gr\"obner basis for $I_{\bott}$ whose
    leading monomials are squarefree. This shows that
    $\KK[\,p\,] / I_{\bott}$ is normal. 
    Hochster's Theorem \cite[Theorem 1]{Hochster1972} implies Cohen-Macaulayness.
  \end{proof}

  We could also give an alternative proof of \ref{thm:csiszargroebner}
  using toric fiber products.
    Namely, the toric algebra $\KK[\,p\,]/I_\csi$ can be obtained as
  an iterated toric fiber product of  suitably graded smaller polynomial rings that are
  attached to the pieces in a decomposition 
  of $\GeneralPoset$ into antichains.
  The matrices $M_q$ introduced after the proof of 
  \ref{thm:csiszargroebner}  
  represent the
  ``glueing quadrics'' used for constructing
  larger toric ideals from smaller ones. 

  We close with some brief remarks on the \Bottom model  
  for the Boolean lattice $\GeneralPoset = 2^{[n]}$.
  In Section 2 we saw that, for $n=3$, the ideal $I_{\bott}$ is  principal 
  with generator
  $\,  p_{123}p_{231}p_{312} - p_{132} p_{213} p_{321} $. This cubic
  is of type \ref{eq:bottompath}. It represents the unique cycle in 
  the hexagon $Q_{1,2}$.
 
  For $n=4$, the minimal Markov basis of the \Bottom model consists of
  $6$ quadrics, $ 64$ cubics and  $ 93$ quartics.
  Thus, here we encounter binomials of both types      
  \ref{eq:bottomquadric} and    \ref{eq:bottompath}.
  The Hilbert series of the Cohen-Macaulay ring $\,\KK[\,p\,] / I_{\bott}\,$
  for $\,\GeneralPoset  = 2^{[4]}\,$ is found to be
  $$ \begin{small} \frac{1 + 12t + 72t^2  + 228t^3  + 291t^4  + 168t^5  + 36t^6}{(1-t)^{12}} .
  \end{small} $$
  
 \section{The \Inversion Model}
  
  The inversion model is defined only in the case when 
  $\GeneralPoset $ is the distributive lattice associated
  with a constraint poset  $\PartialRanking$ on $[n]$.
  The maximal chains in $\GeneralPoset$ correspond to
  linear extensions $\pi \in \LinExt(\PartialRanking)$ of the constraint poset.
  These are the permutations $\pi \in \symm_n$ that are compatible with
  $\PartialRanking$.     Fix unknowns 
  $u_{ij}$ and $v_{ij}$ for  $1 \leq i < j \leq n$. 
  Algebraically, the \Inversion model is defined 
  by the toric ideal which is the kernel of the monomial map
  $$p_\pi \, \,\,\, \mapsto 
    \prod_{{1 \leq i < j \leq n} \atop {\pi^{-1}(i) < \pi^{-1}(j)}} \!\!\!\! u_{ij}
    \prod_{{1 \leq i < j \leq n} \atop {\pi^{-1}(i) > \pi^{-1}(j)}} \!\!\!\!\! v_{ij}.$$
  We begin considering the {\em unconstrained \Inversion model}.
  By this we mean the case when $\PartialRanking$ is an $n$-element antichain,
  so there are no constraints at all. In that unconstrained case, we have 
  $\GeneralPoset = 2^{[n]}$ 
  and our state space  
  $\MaxChain(\GeneralPoset) = \symm_n = \LinExt(\PartialRanking)$
  consists of all $n!$ permutations.

  The {\em Mallows model} \cite{Marden1995} is a
  natural specialization of the unconstrained \Inversion model to a single 
  parameter $q$.
  It is obtained by setting   $u_{ij} := 1$ and $v_{ij} :=q$. So,
  in this model, the probability of
  observing the permutation    $\pi$ is
  $P(\pi) = Z^{-1} q^{|\inv(\pi)|}$, where
    $$ \inv(\pi) \,\, = \,\, \bigl\{  (i,j)\,:\, 1 \leq i < j \leq n,\,   \pi^{-1}(i) > \pi^{-1}(j) \bigr\} $$
  is the set of inversions of $\pi$, and $Z$ is a normalizing constant.
  In contrast,
  our inversion model permits different parameters for the various inversions
  occurring in a permutation.
      
  The model polytope for the unconstrained \Inversion model  is a familiar
  object in combinatorial optimization, where it is known as
  the {\em linear ordering polytope}  
  \cite{Fiorini2006, GroetschelJuengerReinelt1985}.
  It is known that optimizing a general linear function over the linear
  ordering polytope is an NP-hard problem 
  \cite{GroetschelJuengerReinelt1985}.
  This mirrors the fact that the facial structure
  of this polytope is very complicated and a complete description appears 
  out of reach.
  As a result of this, we expect the toric rings associated with the
  \Inversion models to be more complicated than those studied in the
  previous two sections. Our study was limited to finding some computational 
  results.

  \begin{Theorem} \label{eq:justacomputation}
    For $n \leq 6$ the toric ring 
    of the unconstrained \Inversion model 
    is normal and hence Cohen-Macaulay.
    For $n \leq 5$ it is Gorenstein and its Markov basis 
    consists of quadrics.
    For $n= 6$ it is not Gorenstein and there exists
    a Markov basis element of degree~$3$.
  \end{Theorem}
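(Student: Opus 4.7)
The plan is to verify all three assertions by direct computation, since the claims concern specific small values of $n$. For each $n \in \{3,4,5,6\}$, form the $(n(n-1)) \times n!$ integer matrix $A_n$ whose columns $A_\pi$ are indexed by $\pi \in \symm_n$ and whose rows are indexed by the variables $u_{ij}, v_{ij}$ for $1 \le i < j \le n$, with $(A_n)_{u_{ij},\pi}=1$ if $\pi^{-1}(i)<\pi^{-1}(j)$ and $(A_n)_{v_{ij},\pi}=1$ otherwise. The associated lattice polytope is the linear ordering polytope, and the toric ring is $\KK[\,p\,]/I_{\inv}$, the kernel of the monomial map $p_\pi \mapsto A_n$-monomial from the statement. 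I would carry out the computations with \texttt{4ti2} (for Markov bases and lattice data), \texttt{Normaliz} (for semigroup normality, Hilbert series, and the canonical module), and \texttt{Macaulay2} (for double-checking ideal-theoretic properties).

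To establish normality, I would feed the columns of $A_n$ into \texttt{Normaliz} and verify that the affine semigroup $\NN A_n$ coincides with $\ZZ A_n \cap \RR_{\geq 0} A_n$ for each $n \le 6$. Once normality is confirmed, Cohen--Macaulayness of $\KK[\,p\,]/I_{\inv}$ is automatic by Hochster's theorem \cite[Theorem~1]{Hochster1972}, exactly as in the proofs of the previous sections.

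For the Gorenstein statement, I would compute the $h$-vector of $\KK[\,p\,]/I_{\inv}$ from the Hilbert series produced by \texttt{Normaliz}. A normal graded toric algebra with $h$-vector $(h_0,\dots,h_s)$ is Gorenstein if and only if the $h$-vector is palindromic, i.e.\ $h_i = h_{s-i}$ for all $i$ (this is Stanley's criterion for the canonical module of a normal semigroup ring to be principal). For $n=3,4,5$ I expect the $h$-vector to be palindromic; for $n=6$ I expect symmetry to fail at an explicit coordinate, which would then be exhibited in the paper. Equivalently, one can test whether a rescaled translate of the linear ordering polytope is reflexive.

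For the Markov basis, I would run \texttt{4ti2}'s \texttt{markov} on $A_n$ for each $n \le 6$ and read off the degrees of the binomial generators. For $n=3,4,5$ the claim is that every returned binomial has degree $2$; for $n=6$ the claim is that there is a minimal generator of degree $3$, and I would record one such cubic explicitly from the \texttt{4ti2} output and verify that it cannot be reduced modulo the quadratic part of the Markov basis. The principal obstacle is purely a computational one: the $n=6$ instance has $720$ unknowns and the linear ordering polytope in dimension $\binom{6}{2}=15$ is notoriously intricate (its facial structure is the subject of \cite{Fiorini2006, GroetschelJuengerReinelt1985}), so the Markov basis and Hilbert series computations at $n=6$ may push the limits of current software; running \texttt{4ti2} with symmetry reduction exploiting the natural $\symm_6$-action on the columns is the natural way to make this feasible.
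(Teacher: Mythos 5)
Your proposal is correct and follows essentially the same route as the paper: both rely on direct computation of Markov bases with {\tt 4ti2} and of normality and Hilbert series with {\tt Normaliz}, deduce Cohen--Macaulayness from Hochster's theorem, decide the Gorenstein property via symmetry of the $h$-vector, and exhibit an explicit cubic Markov basis element at $n=6$ (the paper certifies its necessity by checking that its fiber contains only two cubic monomials, which matches your irreducibility check).
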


\begin{proof}
  Computations using {\tt 4ti2} \cite{4ti2} show
  that the Markov basis for $n=3,4,5$ consists of $2,81,3029$ quadratic binomials.
  We do not know whether there is a quadratic Gr\"obner basis for $n=5$,
  or whether the ring is Koszul. The Hilbert series for $n \leq 5 $~are
  $$
    \begin{array}{cc}
      n & \mbox{Hilbert Series} \\
      \hline 
     3 & (1+2t+t^2)/(1-t)^4 \\ 
      4 & (1+17t+72t^2+72t^3+17t^4+t^5)/(1-t)^7 \\ 
      5 & (1{+}109t{+}2966{t}^{2}
      {+}22958{t}^{3}{+}61026{t}^{4}{+}61026{t}^{5}{+}22958{t}^{6}{+}2966{t}^{7}{+}109t^8{+}t^9)/(1-t)^{11}
    \end{array}
  $$
  All three numerator polynomials are symmetric.
  Using {\tt normaliz} \cite{Normaliz2010} one checks that the toric ring is normal in each case.
  Hochster's Theorem \cite{Hochster1972} implies that it is Cohen-Macaulay.
  The Gorenstein property  now follows from the general result
  that any Cohen Macaulay domain
  whose Hilbert series has a symmetric numerator polynomial is Gorenstein.
  
  For $n=6$, the computations are much harder, and they reveal that
  the above nice properties no longer hold. 
  The software also found
  that the Hilbert series of this unconstrained \Inversion model 
  is the product of $1/(1-t)^{16}$ and the remarkable numerator polynomial
  $$
    \begin{matrix}
      1+704\,t+117783\,t^2+5125328\,t^3+76415229\,t^4 \\+475189840\,t^5+
      1372165343\,t^6+ 1943081264\,t^7+1372165343\,t^8+
      475189840\,t^9\\ +76416069\,t^{10}+5127008\,t^{11}+118623\,t^{12}+704\,t^{14}+t^{14}.
    \end{matrix} 
  $$
  This polynomial is close to symmetric but not symmetric, so the  ring is not Gorenstein.

In addition to $130377$ quadrics, a Markov basis for $n=6$ must contain the cubic binomial
  \begin{equation}
  \label{eq:cubicmarkov}
p_{123456}p_{123645}p_{416253} \,\,-\,\, p_{123465} p_{162345}p_{412536}.
\end{equation}
Indeed, a computation shows that these are only two cubic monomials in the fiber given by
the multiset of inversions
  $\{ (1{,}4),(2{,}4),(2{,}6),(3{,}4),(3{,}5),(3{,}6),(4{,}6),(5{,}6),(5{,}6) \}$.
  \end{proof}

  A complete description of the binomial quadrics in a
  Markov basis  was recently found by
  Katth\"an \cite{Katthaen2011}.
     However, the problem of characterizing a full Markov basis 
  is widely open.

  We do not know whether  normality  holds
  for $n \geq 7$, but we suspect not.
  To address this question,
  we return to the general situation of an underlying
  constraint poset $\PartialRanking$.
  The states $\pi$ of the {\em $\PartialRanking$-constrained \Inversion model}
  are elements of the subset  $\LinExt(\PartialRanking) \subset \symm_n$.
  This inclusion corresponds to passing to some coordinate hyperplanes
  in the ambient space of the model polytopes. Therefore, the model polytope
  for the $\PartialRanking$-constrained model is a face of the model polytope for the
  unconstrained model. Hence, to answer our question about normality for $n\geq 7$,
  it could suffice to show that the toric ring for $\PartialRanking$ is not normal.


  At present our state of knowledge about the $\PartialRanking$-constrained 
  \Inversion models is rather limited. We do not yet even have useful  formula
  for the dimension of its model polytope. By contrast, the dimension of the
  unconstrained model equals ${n \choose 2}$, as this is the dimension
  of the linear ordering polytope. This was shown, for example, in
  \cite[Proposition 3.10]{ReinerSaliolaWelker2010}.

  We wish to mention a family of constraint posets
  that is important for applications of statistical ranking in data mining, e.g.~in 
  recent work of Cheng {\it et al.} \cite{ChengDembczynskiHuellermeier2010}.
  For that application one would take $\PartialRanking$ 
  to be any disjoint union of a chain and an antichain.

  \begin{Example} \label{ex:mixedposet}
    Let $n \geq 4$ and $\PartialRanking$ be the poset consisting
    of the  $3$-chain $1 < 2 < 3$ and $n-3$ incomparable elements. 
    If $n=4$ then  $\LinExt(\PartialRanking) = \{1234, 1243, 1423, 4123\}$
    and the toric ideal $I_{\inv}$ is the zero ideal in the
    polynomial ring in four unknowns. If $n=5$ then the number of states is $20$
    and the model polytope has dimension $7$, degree $82$, and the Hilbert series is  
    $$
    \begin{small}
    \frac{1 + 12 t + 38 t^2 + 28 t^3 + 3 t^4}{(1-t)^8}.
    \end{small}
    $$

   The Markov basis for 
   this $\PartialRanking$-constrained model
   consists of $40$ quadrics:
   $$  
      \begin{tiny} 
        \begin{matrix}
    p_{41523}p_{51423}-p_{14523}p_{54123} & p_{41253}p_{51423}-p_{14253}p_{54123} & 
    p_{41235}p_{51423}-p_{14235}p_{54123} & p_{41253}p_{51243}-p_{12453}p_{54123} \\
    p_{41235}p_{51243}-p_{12435}p_{54123} & p_{15423}p_{51243}-p_{15243}p_{51423} & 
    p_{14253}p_{51243}-p_{12453}p_{51423} & p_{14235}p_{51243}-p_{12435}p_{51423} \\ 
    p_{41235}p_{51234}-p_{12345}p_{54123} & p_{15423}p_{51234}-p_{15234}p_{51423} & 
    p_{15243}p_{51234}-p_{15234}p_{51243} & p_{14235}p_{51234}-p_{12345}p_{51423} \\
    p_{12543}p_{51234}-p_{12534}p_{51243} & p_{12435}p_{51234}-p_{12345}p_{51243} &
    p_{15423}p_{45123}-p_{14523}p_{54123} & p_{15243}p_{45123}-p_{41523}p_{51243} \\
    p_{15234}p_{45123}-p_{41523}p_{51234} & p_{12543}p_{45123}-p_{12453}p_{54123} &
    p_{12534}p_{45123}-p_{41253}p_{51234} & p_{12354}p_{45123}-p_{12345}p_{54123} \\
    p_{15243}p_{41253}-p_{12543}p_{41523} & p_{15234}p_{41253}-p_{12534}p_{41523} &
    p_{14523}p_{41253}-p_{14253}p_{41523} & p_{15234}p_{41235}-p_{12354}p_{41523}  \\
    p_{14523}p_{41235}-p_{14235}p_{41523} & p_{14253}p_{41235}-p_{14235}p_{41253} & 
    p_{12534}p_{41235}-p_{12354}p_{41253} & p_{12453}p_{41235}-p_{12435}p_{41253} \\
    p_{14253}p_{15243}-p_{12453}p_{15423} & p_{14235}p_{15243}-p_{12435}p_{15423} & 
    p_{14235}p_{15234}-p_{12345}p_{15423} & p_{12543}p_{15234}-p_{12534}p_{15243} \\
    p_{12435}p_{15234}-p_{12345}p_{15243} & p_{12543}p_{14523}-p_{12453}p_{15423} & 
    p_{12534}p_{14523}-p_{14253}p_{15234}  & p_{12354}p_{14523}-p_{12345}p_{15423} \\
    p_{12534}p_{14235}-p_{12354}p_{14253} & p_{12453}p_{14235}-p_{12435}p_{14253} & 
    p_{12435}p_{12534}-p_{12345}p_{12543} & p_{12354}p_{12453}-p_{12345}p_{12543}
        \end{matrix} 
      \end{tiny}
  $$
  \end{Example}

  It can be asked which $\PartialRanking$-constrained \Inversion models 
  have a Markov basis of quadrics and, more generally, which degrees appear in a
  Markov basis. We confirmed the quadratic Markov basis for all posets $\PartialRanking$
  on $n \leq 4$ elements, all on $n=5$ elements arising by adding one incomparable element to 
  a poset on $4$ elements, and all unconstrained  models for $n \leq 5$.

  Interestingly, the notion of  \Inversion model changes if we
  define $i < j$ to be an inversion if $\pi(i) > \pi (j)$.
  The latter can be seen as a homogeneous Babington-Smith model from \cite{Marden1995}.
  The defining monomial map for this
  {\em alternative \Inversion model} equals
  $$ \qquad p_\pi \,\ \mapsto \prod_{{1 \leq i < j \leq n} \atop {\pi(i) < \pi(j)}} 
	  \!\! u_{ij}	  \prod_{{1 \leq i < j \leq n} \atop {\pi(i) > \pi(j)}} \!\! v_{ij}
     \qquad \hbox{for} \,\,\,\pi \in \LinExt(\PartialRanking). $$
  For the $3$-chain $1 < 2 < 3$ with two incomparable elements,
  the Markov basis now consists~of 
    $$
      \begin{tiny}
        \begin{matrix} 
           p_{15243}p_{51423} - p_{12543}p_{54123} & p_{15234}p_{51423} - p_{12534}p_{54123} & p_{15423}p_{51243} - p_{12543}p_{54123} &           p_{15234}p_{51243} - p_{12354}p_{54123} \\
            p_{12534}p_{51243} - p_{12354}p_{51423} & p_{15423}p_{51234} - p_{12534}p_{54123} &
           p_{15243}p_{51234} - p_{12354}p_{54123} & p_{15234}p_{51234} - p_{12345}p_{54123} \\
           p_{12543}p_{51234} - p_{12354}p_{51423} &           p_{12534}p_{51234} - p_{12345}p_{51423} & p_{12354}p_{51234} - p_{12345}p_{51243} & p_{12534}p_{15243} - p_{12354}p_{15423} \\
           p_{12543}p_{15234} - p_{12354}p_{15423} & p_{12534}p_{15234} - p_{12345}p_{15423} & p_{12354}p_{15234} - p_{12345}p_{15243} &
           p_{12354}p_{12534} - p_{12345}p_{12543} \\ p_{12435}p_{12453} - p_{12345}p_{12543} , 
                   \end{matrix} 
      \end{tiny}
    $$
    and
        $\,      p_{14235}p_{14253}p_{14523} - p_{12345}p_{15243}p_{15423} $, 
        and
        $\,p_{41235}p_{41253}p_{41523}p_{45123} - p_{12345}p_{51243}p_{51423}p_{54123} $.
        So,  unlike in \ref{ex:mixedposet}, this    
     Markov basis is not quadratic.
    The Hilbert series equals
    $$
    \begin{small}
    \frac{1 + 9t + 28t^2 + 51t^3 + 66t^4  + 63t^5  + 44t^6  + 21t^7  + 5t^8}{(1-t)^{11}}.
    \end{small}
    $$
  Note that, if
   $\LinExt(\PartialRanking)$ is closed under taking
  inversions, then this model coincides with the normal $\PartialRanking$-constraint
  \Inversion model up to a relabeling. 
    This holds for the
  unconstrained \Inversion model. 
  All examples tested in this alternative model had normal model polytopes. 
  
\section{Plackett-Luce Model and  Bradley-Terry model}
    \label{sec:plackettluce}

  The \PlackettLuce model is a non-toric model on
  the set $ \LinExt(\PartialRanking)$ of permutations $\pi \in \symm_n$ that are consistent
  with a given constraint poset $\PartialRanking$ on $[n]$.
  It can be defined by the map
  \begin{equation} \label{PLmap2}
    p_\pi \,\, \mapsto \,\,\prod_{i=1}^{n-1} \frac{1}{ \sum_{j=1}^i \theta_{\pi(j)} }
    \qquad \hbox{for $\,\pi \in        \LinExt(\PartialRanking)  $}.
  \end{equation}
  We denote this model by $\PlackettLuceShort_\PartialRanking$
  and its homogeneous ideal by $I_{\pl_\PartialRanking}$.
  Thus $I_{\pl_\PartialRanking}$ is the kernel of the ring map
  $\RR[ \,p_\pi :\pi \in  \LinExt(\PartialRanking) \,] 
   \rightarrow \RR(\theta_1,\theta_2,\ldots,\theta_n)$
  defined by the formula \ref{PLmap2}.
  The formula shows that the \PlackettLuce model
  is a submodel of the \Bottom model on $ \LinExt(\PartialRanking)$.
  In fact, the \Bottom model is the {\em toric closure} of the
  \PlackettLuce model, by which we mean that
  $\BottomPol_\PartialRanking$    is the smallest toric model containing 
  $\PlackettLuceShort_\PartialRanking$. The specialization map is
  \begin{equation} \label{PLspec} 
    t_{\pi(\{1,2 ,\ldots, i\})} \quad \mapsto \quad \bigl( \theta_{\pi(1)} +
    \theta_{\pi(2)} + \cdots + \theta_{\pi(i)} \bigr)^{-1}. 
  \end{equation}
  We fix $\KK = \CC$ and regard the \PlackettLuce model 
  $\PlackettLuceShort_\PartialRanking$ as a projective variety in
  $\PP^{|\LinExt(\PartialRanking)|-1}$.
  The toric closure property means that all binomials
  in  $I_{\pl_\PartialRanking}$ must lie in $I_{\rm asc}$,
  and this follows from unique factorization in $\mathbb{R}[\theta_1,\ldots,\theta_n]$,
  given that the linear forms in \ref{PLspec} are distinct.
    
  In order for $\PlackettLuceShort_\PartialRanking$
  to be properly defined as a statistical model, its probabilities
  should sum to $1$. For this we would need to identify the normalizing constant,
  which is the image of $\sum_{\pi \in \LinExt(\PartialRanking)} p_\pi$
  under the map  \ref{PLmap2}. A formula for this quantity can be derived,
  for many situations of interest, from equations  (25) and (26) in 
  Hunter's article \cite{Hunter2004}. The most general situation where the
  normalizing constant was determined can be found in \cite{BoussicaultFerayLascouxReiner2010}.
  They make use of sophisticated methods from the algebraic and
  geometric theory of valuations on cones.  
  In our situation, $\,\sum_{\pi \in \symm_n} p_\pi\,$ is mapped to 
  $\frac{1}{\theta_1 \theta_2\cdots \theta_n}$
  under the ring map in \ref{PLmap2}. 
 
  Let us begin by examining the unconstrained case when
  $\PartialRanking$ is an antichain, $\GeneralPoset = 2^{[n]}$ and	 
  $\LinExt(\PartialRanking) =  \MaxChain(\GeneralPoset)= \symm_n$.
  This is the  Plackett-Luce model $\PlackettLuceShort_n$ familiar from
  the statistics literature \cite{Hunter2004, Luce1959, Plackett1968}.
  With the correct normalizing constant, its parametrization equals
  \begin{equation} \label{PLmap}
    p_\pi \,\, \mapsto \,\,
        \prod_{i=1}^n \frac{\theta_{\pi(i)}}{ \sum_{j=1}^i \theta_{\pi(j)} }
            \qquad \hbox{for $\,\pi \in \symm_n$}.
  \end{equation}
  This defines a polynomial map
  from the non-negative orthant $\RR_{\geq 0}^n$ to the
  $(n! - 1)$-dimensional simplex of probability distributions on
  the symmetric group $\symm_n$. 
  We shall regard
  $\PlackettLuceShort_n$ as a complex projective variety in the
  ambient $\,\PP^{n ! - 1}$.
  Being the image of a rational map from $\PP^{n-1}$, 
  the dimension of this variety is $\,\leq n-1$.
\ref{thm:PLmain} shows that it equals $ n-1$.
    
  \begin{Example}[$n=3$] \label{exPL3} 
    The \PlackettLuce model $\PlackettLuceShort_3$ is a surface of degree $7$
    embedded in $5$-dimensional projective space $\PP^5$.
    The parameterization \ref{PLmap2} of that surface is equivalent~to
    $$
    \begin{matrix}
        p_{123} \mapsto 
          \theta_2 \theta_3 (\theta_1 {+}\theta_3 ) (\theta_2 {+}\theta_3 ), &
        p_{132} \mapsto \theta_2 \theta_3 (\theta_1 {+}\theta_2) (\theta_2 {+}\theta_3), &
        p_{213} \mapsto \theta_1 \theta_3 (\theta_1 {+}\theta_3) (\theta_2 {+}\theta_3) ,\\
        p_{231} \mapsto \theta_1 \theta_3 (\theta_1 {+}\theta_2) (\theta_1 {+}\theta_3), &
        p_{312} \mapsto \theta_1 \theta_2 (\theta_1 {+}\theta_2) (\theta_2 {+}\theta_3) ,&
        p_{321} \mapsto \theta_1 \theta_2 (\theta_1 {+}\theta_2) (\theta_1 {+} \theta_3) .
      \end{matrix}
    $$
    The defining ideal $I_{\pl_3}$ of $\PlackettLuceShort_3$ is minimally generated 
    by three quadratic polynomials, in addition to the familiar cubic binomial that 
    specifies the ambient \Bottom model:
    $$I_{\pl_3}  \,\,\, = \,\,\, 
      \bigg\langle
      \begin{matrix}
        p_{123}(p_{321} + p_{231})-p_{213}(p_{132} + p_{312}),\,
        p_{312}(p_{123} + p_{213})-p_{132}(p_{231} + p_{321}), \\
        p_{231}(p_{132} + p_{312})-p_{321}(p_{123} + p_{213}), \quad\,
        p_{123}p_{231}p_{312}-p_{132}p_{321}p_{213} \quad
      \end{matrix} \bigg\rangle.
    $$
    The singular locus of $\PlackettLuceShort_3$ consists of the three isolated points
    $e_{321}-e_{231}$, $e_{123}-e_{213}$ and  $e_{132}-e_{312}$ in $\PP^5$.
    In particular, there are no singular points with non-negative coordinates, so
    this statistical model is a smooth surface in the $5$-dimensional probability simplex.

    From the point of view of algebraic geometry, our parametrization map represents the blow-up
    of the projective plane $\PP^2 $ at the following configuration of nine special points:
    \begin{equation}
    \label{eq:ninepoints}
      \begin{matrix}
        (\,0 :0 :1\,)  & (\,0 :1 :0\,) & (\,1 :0 :0\,)
         \\ (1 :-1 :0) & (1 :0 :-1) & (0 :1:-1) \\
        (1 :1 :-1) & (1 :-1 :1) & (-1 :1 :1)
      \end{matrix} 
    \end{equation}
    This configuration has three $4$-point lines and four $3$-point lines. The map blows
    down the three $4$-point lines, and this creates 
    a rational surface in $\PP^5$ with three singular points.

    From the point of view of commutative algebra, one might ask whether
    the four generators of the ideal $I_{\pl_3}$ form a Gr\"obner basis 
    with respect to some term order.
    A computation reveals that this is not the case. However, 
    we do get a square-free Gr\"obner basis for the lexicographic
    term order with 
    $\, p_{123} {>}  p_{132} {>}  p_{213} {>}  p_{231} {>}  p_{312} {>} p_{321} $.
    The initial ideal equals
    $$ 
      \begin{matrix} 
        {\rm in}_{\rm lex} (I_{\pl_3})  \, &= &
        \langle p_{123}, p_{132}, p_{231} \rangle & \! \cap \! &
        \langle p_{123}, p_{132}, p_{312} \rangle & \! \cap \! &
        \langle p_{123}, p_{132}, p_{213} \rangle & \! \cap \! & & \\ & & 
        \langle p_{123}, p_{213}, p_{231}  \rangle & \! \cap \! & 
        \langle p_{123}, p_{213}, p_{312} \rangle & \! \cap \! &
        \langle  p_{123}, p_{312}, p_{321} \rangle & \! \cap \! &
        \langle  p_{231}, p_{312},  p_{321} \rangle. 
      \end{matrix} 
    $$
    This represents a simplicial complex of seven triangles, listed in a shelling order,
    so $I_{\pl_3}$ is Cohen-Macaulay. 
    The Hilbert series  of the  ring  $\RR[p]/I_{\pl_3}$ equals $\,(1+3t+3t^2)/(1-t)^3$. \qed
  \end{Example}

  \begin{Example}[$n=4$] \label{exPL4}
     The \PlackettLuce model ${\rm PL}_4$ is a threefold of degree
     $191$ in $\PP^{23}$. It is obtained from $\PP^3$ by blowing up
     $55$ lines.
     The  homogeneous prime ideal $I_{\pl_4}$ that defines $\PlackettLuceShort_4$ is minimally generated by
     $105$ quadrics and $75$ cubics. Its Hilbert series equals
     $$ \frac{1+20t+105t^2+65t^3}{(1-t)^4} .    $$ 
     We do not know whether
     $I_{\pl_n}$ is generated in degree $2$ and $3$ for $n \geq 5$.     \qed
  \end{Example}

  Let us now turn to the general \PlackettLuce model 
  with a given constraint poset $\PartialRanking$, so only
  permutations $\pi$ in  $\LinExt(\PartialRanking)$ are allowed.
  The model $\PlackettLuceShort_{\PartialRanking}$ is obtained from
  $\PlackettLuceShort_n$ by projecting onto those coordinates. Algebraically,
  the prime ideal $I_{\PartialRanking}$ is obtained from $I_{\pl_n}$
  by eliminating all unknowns $p_\pi$ where $\pi$ is a
  permutation that is not compatible with $\PartialRanking$.

  \begin{Example} \label{ex:sixmaxchains}
    Let $n=4$ and let $\PartialRanking$ be the poset with
    two covering relations $1 {<} 2$ and $3 {<} 4$. The corresponding
    distributive lattice $\LinExt(\PartialRanking)$ is the product of two chains
    of length $3$. Note that $\LinExt(\PartialRanking)$ has six maximal chains,
    namely, the permutations that respect $1 < 2$ and $3 < 4$.
    The corresponding unknowns are mapped to products
    of four linear forms as follows:
    $$     
      \begin{matrix}
         p_{1234} \mapsto \theta_3 (\theta_1+\theta_3) (\theta_3+\theta_4) (\theta_1+\theta_3+\theta_4) ,&
         p_{1324} \mapsto \theta_3 (\theta_1+\theta_2) (\theta_3+\theta_4) (\theta_1+\theta_3+\theta_4) ,\\
         p_{1342} \mapsto \theta_3 (\theta_1+\theta_2) (\theta_3+\theta_4) (\theta_1+\theta_2+\theta_3), & 
         p_{3124}  \mapsto \theta_1 (\theta_1+\theta_2) (\theta_3+\theta_4) (\theta_1+\theta_3+\theta_4), \\
         p_{3142} \mapsto \theta_1 (\theta_1+\theta_2) (\theta_3+\theta_4) (\theta_1+\theta_2+\theta_3), &
         p_{3412} \mapsto \theta_1 (\theta_1+\theta_2) (\theta_1+\theta_3) (\theta_1+\theta_2+\theta_3).
      \end{matrix}
    $$
    These reducible quartics meet in nine lines in $\PP^3$, so the
    parametrization of $\PlackettLuceShort_{\PartialRanking}$ blows these up.
    The ideal $I_{\PartialRanking}$ is complete intersection. Its minimal generators are the
    cubic
    $$
      p_{1234} p_{1342} p_{3142} + p_{1234} p_{3142}^2 + p_{1234} p_{3142} p_{3412}
      -p_{1234} p_{1324} p_{3412} - p_{1324}^2 p_{3412} - p_{1324} p_{3124} p_{3412} 
    $$
    and the binomial quadric $\,p_{1342}p_{3124}-p_{1324} p_{3142}\,$
    that defines the \Bottom model on $\mathcal{P}$. \qed
  \end{Example}

  The following is our main result in this section.  It should be useful for obtaining
  information about the $(n{-}1)$-dimensional variety 
  $\PlackettLuceShort_{\PartialRanking}$ and its homogeneous prime ideal $I_{\PartialRanking}$.

  \begin{Theorem} \label{thm:PLmain}
    The parameterization
    $\,\PP^{n-1} \rightarrow \PlackettLuceShort_{\PartialRanking} \subset \PP^{|\LinExt(\PartialRanking)|-1}\,$
    of the \PlackettLuce model on the poset $\PartialRanking$ is given geometrically
    as the blowing up of $\PP^{n-1}$ along an arrangement of linear subspaces of codimension $2$.
    These subspaces are defined by the equations $\sum_{i \in A} \theta_i = \sum_{j \in B} \theta_j = 0$
    where $\{A, B\}$ runs over all incomparable pairs in the distributive lattice~on~$\PartialRanking$.
  \end{Theorem}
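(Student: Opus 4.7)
My plan is to clear denominators in the parametrization \ref{PLmap2}, identify the base locus of the resulting rational map with the stated codimension-$2$ linear subspace arrangement, and then apply the graph-closure description of blow-ups.

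For each $A \in \Order(\PartialRanking)$ with $\emptyset \neq A \neq [n]$, let $L_A(\theta) = \sum_{i \in A}\theta_i$ and set $L = \prod_A L_A$. Multiplying each $p_\pi$ in \ref{PLmap2} by $L$ turns the assignment into
\begin{equation*}
    \tilde p_\pi \,=\, \prod_{A \in \Order(\PartialRanking) \setminus C_\pi,\, A \neq \emptyset, A \neq [n]} L_A \qquad (\pi \in \LinExt(\PartialRanking)),
\end{equation*}
where $C_\pi = (\emptyset = A_0 \subsetneq \cdots \subsetneq A_n = [n])$ with $A_i = \pi(\{1,\ldots,i\})$ is the maximal chain of $\Order(\PartialRanking)$ attached to $\pi$. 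All $\tilde p_\pi$ have the same degree, so they define a rational map $\phi: \PP^{n-1} \dashrightarrow \PP^{|\LinExt(\PartialRanking)|-1}$ whose image is $\PlackettLuceShort_\PartialRanking$. I then identify the base locus: $\tilde p_\pi(\theta) = 0$ iff some $L_A$ with $A \notin C_\pi$ vanishes at $\theta$, so all $\tilde p_\pi$ vanish iff the set $S(\theta) := \{A : L_A(\theta) = 0\}$ is not contained in any maximal chain of $\Order(\PartialRanking)$. Because this lattice has unique minimum and maximum, every chain extends to a maximal one, and so this fails precisely when $S(\theta)$ contains two incomparable order ideals $A, B$. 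Therefore
\begin{equation*}
    B \,=\, \bigcup_{\{A, B\} \text{ incomparable in } \Order(\PartialRanking)} V(L_A, L_B),
\end{equation*}
which is the arrangement of codimension-$2$ linear subspaces claimed in the theorem.

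Finally, the graph-closure construction of blow-ups realizes $\phi$ as the composition $\PP^{n-1} \xleftarrow{\pi} \widetilde{\PP^{n-1}} \xrightarrow{\tilde\phi} \PlackettLuceShort_\PartialRanking$, where $\pi$ is the blow-up along the base ideal $\mathcal{I} = (\tilde p_\pi : \pi \in \LinExt(\PartialRanking))$ and $\tilde\phi$ is a morphism. The main obstacle is identifying this with the blow-up along the reduced arrangement $B$, that is, comparing $\mathcal{I}$ with $\mathcal{I}_B = \bigcap_{\{A,B\}\text{ inc.}} (L_A, L_B)$. The inclusion $\mathcal{I} \subseteq \mathcal{I}_B$ is immediate, since for any incomparable pair $\{A, B\}$ the chain $C_\pi$ omits at least one of $A, B$ and so $L_A L_B$ divides $\tilde p_\pi$. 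The reverse comparison reduces to the combinatorial observation that a squarefree product $\prod_{A \in T}L_A$ lies in every $(L_A, L_B)$ precisely when $T^c$ is a chain in $\Order(\PartialRanking) \setminus \{\emptyset, [n]\}$, and any such chain extends to a maximal chain indexing some $\pi \in \LinExt(\PartialRanking)$, whence $\tilde p_\pi$ divides $\prod_{A \in T} L_A$. Upgrading this from squarefree monomials to the full scheme-theoretic identification of the base ideal with the ideal of the reduced arrangement, so that the two blow-ups coincide, is the most delicate step.
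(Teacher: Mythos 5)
Your route is in substance the paper's own. The paper phrases the combinatorics through the Stanley--Reisner ideal $M$ of the distributive lattice $\Order(\PartialRanking)$ and its Alexander dual $M^{*}=\bigcap_{\{A,B\}\ \mathrm{incomparable}}\langle t_A,t_B\rangle$, whose minimal generators are exactly your cleared-denominator products $\prod_{A\notin C_\pi}t_A$ indexed by maximal chains; substituting $t_A\mapsto\sum_{i\in A}\theta_i$ turns these into your $\tilde p_\pi$ and the intersection of $V(M^*)$ with that linear $\PP^{n-1}$ into the stated codimension-$2$ arrangement. Your base-locus computation (all $\tilde p_\pi$ vanish iff the set of vanishing forms $L_A$ is not contained in a maximal chain, iff it contains an incomparable pair) is the same chain-versus-incomparable-pair duality, just expressed through vanishing loci rather than through $M$ and $M^{*}$.

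The one step you explicitly leave open --- that the graph closure, i.e.\ the blow-up of the base ideal generated by the $\tilde p_\pi$, coincides with the blow-up of $\PP^{n-1}$ along the \emph{reduced} arrangement, beyond the easy inclusion of that ideal into $\bigcap_{\{A,B\}}\langle L_A,L_B\rangle$ --- is precisely the point the paper also passes over: after the substitution into \ref{eq:maptp1} it simply asserts that the resulting map \ref{eq:maptp2} ``defines the blow-up along the arrangement.'' So your proposal is at the same level of completeness as the published argument, and you deserve credit for isolating the delicate point. Be aware, though, that your auxiliary squarefree claim (``$\prod_{A\in T}L_A$ lies in every $\langle L_A,L_B\rangle$ precisely when $T^{c}$ is a chain'') does not transfer formally from the monomial ring $\KK[t]$ to $\KK[\theta]$: the forms $L_A$ satisfy linear relations such as $L_A+L_B=L_{A\cup B}+L_{A\cap B}$, so for instance $L_{A\cup B}\in\langle L_A,L_B\rangle$ whenever $A\cap B=\emptyset$, and membership is no longer a purely combinatorial condition on the index set $T$. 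A genuine completion of either your argument or the paper's would have to show that the specialized products generate (or at least have the same integral closure/Rees algebra as) the intersection ideal of the arrangement, which is a statement about this particular specialization and not a consequence of the Alexander duality computation alone.
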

 
  \begin{proof}
    Let $\RR[t]$ denote the polynomial ring of parameters
    in the \Bottom model \ref{eq:tmap}. Its indeterminates are $t_A$ where $A$ runs over  
    subsets of $[n]$ that are order ideals in $\PartialRanking$. We define
    $M$ to be the Stanley-Reisner ideal of the
    distributive lattice of order ideals in $\PartialRanking$.
    This is the ideal in $\RR[p]$ generated by products
    $\, t_A t_B \,$ where $A$ and $B$ are incomparable,
    meaning that neither $A \subset B$ nor $B \subset A$ holds.
    The {\em Alexander dual} of $M$ is the monomial ideal
    $$M^* \,\,\, = \,\, \bigcap_{\{A,B\}} \langle \,t_A \, , \, t_B \, \rangle , $$
    where the intersection is over all incomparable pairs of order ideals.
    The generators of $M^*$ correspond to the associated primes of $M$, so
    they are indexed by compatible permutations $\pi \in \LinExt(\PartialRanking)$.
    Interpreting $\pi$ as a maximal chain of order ideals,
    that correspondence is
    \begin{equation}
      \label{eq:maptp1}
      p_\pi \,\, \mapsto \,\, \prod_{A \not\in \pi} t_A 
       \qquad \qquad \hbox{for $\,\pi \in        \LinExt(\PartialRanking)  $}.
    \end{equation}
    The arrangement of subspaces described in the statement of
    \ref{thm:PLmain} is the intersection of the variety of $M^*$ with a subspace
    $\PP^{n-1}$ defined by $\, t_A \,= \, \sum_{i \in A} \theta_i$.
    By substituting this into \ref{eq:maptp1} we see that
    the blow-up along that subspace arrangement is defined by the map
    \begin{equation}
      \label{eq:maptp2}
      p_\pi \,\,\,\, \mapsto \,\, 
        \prod_{A \not\in \pi} \bigl( \sum_{i \in A} \theta_i \bigr) \,\, = \,\,
        {\rm const} \cdot \prod_{A \in \pi} \frac{1}{ \sum_{i \in A} \theta_i}
        \qquad \hbox{for $\,\pi \in        \LinExt(\PartialRanking)  $}.
    \end{equation}
    This is precisely the defining parametrization \ref{PLmap2} of the 
    \PlackettLuce model $\PlackettLuceShort_\PartialRanking$.
  \end{proof}
 
  \begin{Example}
    Let $n=4$ and $\PartialRanking$ as in \ref{ex:sixmaxchains}.
    Then the above Stanley-Reisner ideal is
    $$      M \,\,\, = \,\,\, \langle\,
      t_{1} t_{3},\, t_3 t_{12}, \,
      t_{12} t_{13} ,\, t_1 t_{34} ,\, t_{12} t_{34},\, t_{13} t_{34}, \,
      t_{34} t_{123},\,  t_{12} t_{134},\, t_{123} t_{134}\, \rangle.
    $$
    Its Alexander dual reveals the combinatorial pattern of the map in
     \ref{ex:sixmaxchains}:
    $$
      M^* \,\,\, = \,\,\, \langle\,
      t_3 t_{13} t_{34}  t_{134},\,
      t_3 t_{12} t_{34} t_{123},\,
      t_1 t_{12} t_{34} t_{123},\,
      t_3 t_{12} t_{34} t_{134},\,
      t_1 t_{12} t_{34}  t_{134},\,
      t_1 t_{12} t_{13} t_{123} \,\rangle.
    $$
    The model $\PlackettLuceShort_\PartialRanking$ 
    is the blow-up of $\PP^3$ at nine lines,
    one for each of the generators of $M$. \qed
  \end{Example}
    
  Each of our unconstrained ranking models was considered as a subvariety
  of the complex projective space $\mathbb{P}^{n ! - 1}$. If $K$ is any $k$-element
  subset of $[n]$ then we obtain a natural rational
  map $\PP^{n! - 1} \dashrightarrow \PP^{k! - 1}$
  which records the probabilities for each of the $\,k !\,$ orderings of $K$ only.
  Statistically, this map corresponds to {\em marginalization} for the induced orderings on $K$.
  We can now take the direct product of all of these maps, where $K$ runs over all
  $\binom{n}{k}$ subsets of cardinality $k$ in $[n]$.
  The resulting rational map into a product of projective spaces,
  \begin{equation}
    \label{from5to111}
  \PP^{n! - 1} \,\dashrightarrow \, (\PP^{k !- 1})^{\binom{n}{k}} ,
  \end{equation}
   is called the {\em complete  marginalization map of order $k$}.
  For example, if $n=3$ and $k=2$ then we are mapping into a
  product of three projective lines, with coordinates
  $(q_{12}:q_{21})$, $(q_{13}:q_{31})$ and $(q_{23}:q_{32})$ respectively.
  Here,  the complete marginalization is the rational map
  $\,    \PP^5 \,\dashrightarrow \,    \PP^1 \times \PP^1 \times \PP^1  \,$
  which is given in coordinates as follows:
  $$ 
    \begin{matrix} 
	  (q_{12}:q_{21}) & = &(p_{123}+p_{132}+p_{312} : p_{213}+p_{231}+p_{321}) \,,\\
      (q_{13}:q_{31}) &= & (p_{132}+p_{123}+p_{213}: p_{312}+p_{321}+p_{231})\,,\\
      (q_{23}:q_{32}) & = &(p_{123}+p_{213}+p_{231}: p_{132}+p_{312}+p_{321}).
    \end{matrix} 
  $$
  We shall refer to the complete marginalization of order $2$ as the
  {\em pairwise marginalization}.

  \begin{Example} \label{BTdrei}
    The pairwise marginalization of  the \PlackettLuce surface  $\PlackettLuceShort_3 \subset \PP^5$
    is the surface in $\PP^1 \times \PP^1 \times \PP^1$ that is
    defined  by the binomial equation
    $\,  q_{12} q_{23} q_{31} \, = \, q_{21} q_{32} q_{13}$. 
    The composition of the map in \ref{exPL3}
    with the map in \ref{from5to111} is a toric rational map
    $\PP^2 \dashrightarrow \PP^1 \times \PP^1 \times \PP^1$
    that blows up the three coordinate points 
    $(1{:}0{:}0)$, $(0{:}1{:}0)$ and $(0{:}0{:}1)$.
    \qed
  \end{Example}
 
  It is worthwhile, both algebraically and statistically,  to study the various marginalizations of  
  the \Csiszar model,  \Bottom model, the \Inversion model   and the \PlackettLuce model. 
  Of particular interest
  is the pairwise marginalization of the Plackett-Luce model. This is known in
  the literature as the {\em \BradleyTerry model} \cite{Hunter2004}.
  All of these marginalized models make sense relative to a fixed
  constraint poset $\PartialRanking$. Here, we regard each $k$-set $K$ as subposet
  of $\PartialRanking$ and we write the corresponding marginalization map as
  \begin{equation}
    \label{plpk} 
    \PP^{|\LinExt(\PartialRanking)|-1}\, \dashrightarrow \, \PP^{|\LinExt(K)|-1}. 
  \end{equation}
  The complete $k$-th marginalization is the image of the direct product of these maps,
  as $K$ runs over all $k$-sets. For convenience, we shall here remove those $k$-sets
  $K$ that are totally ordered in $\mathcal{P}$ because the corresponding maps
  in \ref{plpk} are constant when $|\LinExt(K)| =  1$.

  We conclude this article with  the following algebraic characterization of the \BradleyTerry model.
  We write $\PartialRanking^c$ for the bidirected graph on
  $[n]$ where $(i,j)$ is a directed edge if
  $i$ and $j$ are incomparable in $\PartialRanking$.
  Each circuit $i_1,i_2,\ldots,i_r , i_1$ in $\PartialRanking^c$
  is encoded as a binomial:
  \begin{equation}
    \label{circuits} 
    q_{i_1i_2} q_{i_2 i_3} \cdots q_{i_{r-1} i_r} q_{i_r i_1} \,-\,
    q_{i_2i_1} q_{i_3 i_2} \cdots q_{i_r i_{r-1}} q_{i_1 i_r} . 
  \end{equation}
  These binomials define hypersurfaces in $\PP^{\binom{n}{2}}$.
  For instance, the model in \ref{BTdrei}  is the toric hypersurface
  in $\PP^1 \times \PP^1 \times \PP^1$
  thus associated to a $3$-cycle. 
  
  The theorem below refers to  {\em unimodular Lawrence ideals}.
  This class of toric ideals   was introduced and studied by Bayer {\it et~al.} in \cite{BPS2001}. 
  The  associated  toric varieties  live naturally in
  a product of projective lines $\PP^1 \times \cdots \times \PP^1$. The case of interest here is
  that of unimodular Lawrence ideals arising from graphs.
  For these  ideals and their syzygies we refer to \cite[\S 5]{BPS2001}.

  \begin{Theorem} 
    The \BradleyTerry model with constraints $\PartialRanking$ is  toric. It is defined by
    the unimodular Lawrence ideal whose generators
    are the circuits \ref{circuits} in the bidirected graph~$\PartialRanking^c$.
  \end{Theorem}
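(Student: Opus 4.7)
The plan is to verify three claims in sequence: first, that the Bradley-Terry model admits a transparent monomial parametrization by the strength parameters $\theta_i$, so that it is toric; second, that its multi-homogeneous toric ideal coincides with the Lawrence ideal of the signed vertex-edge incidence matrix of $\mathcal{P}^c$; and third, that in this unimodular setting the circuit binomials \ref{circuits} generate the ideal.

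For the first step, I will recall the classical Bradley-Terry parametrization assigning to each edge $e = \{i,j\}$ of $\mathcal{P}^c$ the point $(q_{ij}{:}q_{ji}) = (\theta_i{:}\theta_j)$ in $\PP^1_e$. Assembled over all incomparable pairs, this yields a monomial map $\PP^{n-1} \dashrightarrow \prod_{e} \PP^1_e$, whose image is by definition a toric subvariety. The encoding matrix $A$ has rows indexed by $[n]$ and two columns per edge, with the column for $(i,j)$ being the standard basis vector $e_i \in \ZZ^n$.

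Next I will identify the toric ideal with a Lawrence ideal by unwinding the multi-grading. A binomial $q^{a^+} - q^{a^-}$ is multi-homogeneous in $\prod_e \PP^1_e$ precisely when $a_{(i,j)} + a_{(j,i)} = 0$ for every edge $e = \{i,j\}$. Orienting each edge arbitrarily and setting $u_e = a_{(i,j)}$, the vertex condition $A a = 0$ rewrites as $(B u)_k = 0$ at every vertex $k$, where $B$ is the signed vertex-edge incidence matrix of $\mathcal{P}^c$. Hence the multi-homogeneous kernel of $A$ equals $\ker(B)$, and the multi-homogeneous toric ideal of the Bradley-Terry model coincides with the Lawrence ideal of the lift $\Lambda(B) = \bigl(\begin{smallmatrix} B & 0 \\ I & I \end{smallmatrix}\bigr)$.

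For the final step, the signed vertex-edge incidence matrix of any graph is totally unimodular, so $\Lambda(B)$ is totally unimodular as well, making this Lawrence ideal unimodular. The results of Bayer, Popescu, and Sturmfels \cite[\S 5]{BPS2001} then give a universal Gr\"obner basis consisting of the Lawrence binomials attached to circuits of $\ker(B)$. In the graphic setting, these circuits are the indicator vectors of the simple cycles of $\mathcal{P}^c$, and the Lawrence binomial associated to a cycle $i_1, i_2, \ldots, i_r, i_1$ is exactly the binomial displayed in \ref{circuits}. The main obstacle is this last step: correctly extracting the generation result of \cite{BPS2001} and confirming that in the graphical case the Lawrence circuits reduce precisely to simple-cycle binomials rather than to a more complex family of signed Eulerian relations. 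The earlier two steps are mostly bookkeeping once the identification of $A$ with $\Lambda(B)$ via the multi-projective grading is made explicit.
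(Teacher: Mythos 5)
Your argument is correct, but it reaches the conclusion by a different route than the paper. Both proofs start from the same external inputs: the classical (Hunter) form of the pairwise marginals, and the Bayer--Popescu--Sturmfels theory telling us that a unimodular Lawrence ideal is generated by its circuit binomials, which for a graph are exactly the simple-cycle binomials \ref{circuits} --- so the step you flag as the ``main obstacle'' is precisely what \cite[\S 5]{BPS2001} (together with \cite{Sturmfels1996}, Chapter 7: Graver basis $=$ universal Gr\"obner basis $=$ minimal generators for Lawrence liftings, and circuits $=$ Graver basis in the unimodular case) supplies; your reduction of the homogenized matrix to the Lawrence lifting $\Lambda(B)$ of the signed incidence matrix via the multigrading is sound, since both kernels are the cycle lattice $\{(u,-u):u\in\ker B\}$. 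Where you diverge is in how the Bradley--Terry ideal is matched with this Lawrence ideal. The paper stays in the affine probability coordinates of the rational parametrization \ref{eq:BTmap}: it introduces auxiliary unknowns $\rho_{\{i,j\}}$ and the monomial map \ref{eq:BTmap2}, uses the specialization $\rho_{\{i,j\}}=(\theta_i+\theta_j)^{-1}$ to get the containment of the circuits (plus the linear forms $q_{ij}+q_{ji}-1$) in $I_{\bt_\PartialRanking}$, and then forces equality by observing that both ideals are prime with varieties of the same dimension $n-1$. You instead pass immediately to the multi-projective picture, where the parametrization $(q_{ij}:q_{ji})=(\theta_i:\theta_j)$ (the paper's convention is the harmless relabeling $(\theta_j:\theta_i)$) is a genuine monomial map into $\prod_e\PP^1_e$, so the vanishing ideal of the image closure is computed exactly as a toric ideal and no dimension count or primality comparison is needed. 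Your route buys a cleaner identification of the homogeneous ideal and, as a bonus, a universal Gr\"obner basis of circuits; the paper's route buys the affine statement as well, i.e.\ it pins down the ideal of the statistical model including the normalizations $q_{ij}+q_{ji}=1$, interpreting them as dehomogenization in each $\PP^1$ factor.
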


  From this result we can now determine the commutative algebra invariants
  of the \BradleyTerry model, such as its Hilbert series in the $\ZZ^n$-grading and its multidegree.

  \begin{proof}
    Following \cite{Hunter2004},
    the parametrization of the \BradleyTerry model can be written as
    \begin{equation} \label{eq:BTmap}
       \qquad q_{ij} \,\, \mapsto \,\, \frac{\theta_j}{\theta_i + \theta_j} 
       \qquad \text{for $i,j$ incomparable in $\PartialRanking$.} 
    \end{equation}
    Let $\rho_{\{i,j\}}$ be new unknowns indexed by unordered pairs $\{i,j\} \subset [n]$.
    The unimodular Lawrence ideal associated with the bidirected graph
    $\PartialRanking^c$ is the kernel of the monomial~map
    \begin{equation} \label{eq:BTmap2}
      \qquad q_{ij} \,\, \mapsto \,\, \rho_{\{i,j\}} \cdot \theta_j
      \qquad \text{for $i,j$ incomparable in $\PartialRanking$.} 
    \end{equation}
    The specialization $\,\rho_{\{i,j\}} = (\theta_i + \theta_j)^{-1}$
    shows that the ideal $I_{\bt_\PartialRanking}$ of the \BradleyTerry model is
    contained the unimodular Lawrence ideal generated by the circuits \ref{circuits}.
    In addition, the ideal $I_{\bt_\PartialRanking}$ contains the
    linear polynomials $q_{ij} + q_{ji}-1$. These represent the fact that, in
    any compatible ranking $\pi$, either item $i$ ranks before item $j$ or vice versa, 
    but not both.

    Let $J$ be the ideal generated by the circuits \ref{circuits} and these linear polynomials.
    We have seen that $J \subseteq I_{\bt_\PartialRanking} $, and we are
    claiming that equality holds. But this follows by observing that both ideals
    are prime, and their varieties have the same dimension, namely $n-1$. 
    Indeed, $I_{\bt_\PartialRanking} $ is prime by definition, and
    $J$ is prime because adding the linear forms $q_{ij}+q_{ji}-1$
    to the unimodular Lawrence ideal
    simply amounts to   dehomogenizing from $\mathbb{P}^1$ to $\mathbb{A}^1$
    in each factor. Geometrically, this operation preserves the dimension of the variety.
  \end{proof}

\section*{Acknowledgments}
  We very grateful to Winfried Bruns and Raymond Hemmecke for their substantial   help 
  with the computational results  in \ref{eq:justacomputation}.
  Using the developers' versions of  {\tt Normaliz} \cite{Normaliz2010} and 
   {\tt 4ti2} \cite{4ti2} respectively, they succeeded in computing the
  Hilbert series of the \Inversion model for $n=6$
  and in finding the cubic Markov basis element \ref{eq:cubicmarkov}.
  We also thank Eyke H\"ullermeier and Seth Sullivant for helpful conversations
  and the referees for many suggestions that helped us to improve the paper.
  Bernd Sturmfels was partially supported by
  the U.S.~National Science Foundation 
  (DMS-0757207 and DMS-0968882).
  Volkmar Welker was partially supported by MSRI Berkeley.


\begin{thebibliography}{xxx}
  \bibitem{Barvinok2002}
     A.~Barvinok: {\em
     A Course in Convexity},
      Graduate Studies in Mathematics, {\bf 54},
     AMS, Providence, 2002.
  \bibitem{BoussicaultFerayLascouxReiner2010}
    A.~Boussicault, V.~Feray, A.~Lascoux and V. Reiner:
    Linear extension sums as valuations of cones,
    {\tt arXiv:1008.3278}.
  \bibitem{BPS2001}
     D.~Bayer, S.~Popescu and B.~Sturmfels:
     Syzygies of unimodular Lawrence ideals,
     {\em J. Reine Angew.~Math.}      {\bf 534} (2001) 169--186.
  \bibitem{BeerenwinkelErikssonSturmfels2006}
    N.~Beerenwinkel, N.~Eriksson and B.~Sturmfels:
    Evolution on distributive lattices,   {\em Journal of Theoretical Biology} {\bf 242} (2006) 409--420.
\bibitem{Normaliz2010}
    W.~Bruns, B.~Ichim and C.~S\"oger:
    {\em Normaliz} -- software for affine monoids, vector configurations, lattice polytopes, 
    and rational cones,
    {\tt http://www.mathematik.uni-osnabrueck.de/normaliz/},
    2010.
  \bibitem{CanfieldMcKay2009}
    E.R.~Canfield and B.D.~McKay:
    The asymptotic volume of the Birkhoff polytope,
    {\em  J.~Analytic Comb.} {\bf 4} (2009) article \#2.
  \bibitem{ChanRobbinsYuen1999}
    C.S.~Chan, D.P.~Robbins and D.S.~Yuen: 
    On the volume of a certain polytope,
    {\em Experiment. Math.} {\bf 9} (2000) 91--99. 
  \bibitem{ChengDembczynskiHuellermeier2010}
    W.~Cheng, K.~Dembczynski and E.~H\"ullermeier:
    Label ranking based on the Placket-Luce model,
    {\em Proc. ICML-2010, International Conference on Machine Learning},
    Haifa, Israel, June 2010.
  \bibitem{Csiszar2009a} 
    V.~Csisz\'ar:
    Markov bases of conditional independence models for 
    permutations,
    {\em Kybernetica} {\bf 45} (2009) 249-260.
  \bibitem{Csiszar2009b} 
    V.~Csisz\'ar:
    On L-decomposability of random permutations,
    {\em J. Math. Psychology} {\bf 53} (2009) 294-297.
  \bibitem{DiaconisEriksson2006}
    P.~Diaconis and N.~Eriksson:
    Markov bases for noncommutative Fourier analysis of ranked data,
    {\em  J.~of Symbolic Computation} {\bf 41} (2006) 182--195.
  \bibitem{DiaconisSturmfels1998}
    P.~Diaconis and B.~Sturmfels:
    Algebraic algorithms for sampling from conditional distributions,
    {\em Ann. Stat.} {\bf 26} (1998) 363-397.
  \bibitem{DrtonSturmfelsSullivant2009}
    M.~Drton, B.~Sturmfels and S.~Sullivant:
    {\em Lectures on Algebraic Statistics},
    Oberwolfach Seminars, Vol 39, Birkh\"auser, Basel, 2009. 
  \bibitem{FienbergPetrovicRinaldo2010}
    S.E.~Fienberg, S.~Petrovi\'c and A.~Rinaldo:
    Algebraic statistics for a directed random graph model with reciprocation,
    {\em Algebraic Methods in Statistics and Probability II}, pp. 261--283, 
    Contemporary Math. {\bf 516}, Amer. Math. Soc., Providence, 2010.
  \bibitem{Fiorini2006}
    S.~Fiorini:
    $\{0, \frac 12\}$-cuts and the linear ordering problem: surfaces that define facets,
    {\em SIAM J. Discrete Math.} {\bf 20} (2006), 893--912.
  \bibitem{4ti2} 
    4ti2 team:
    {\em 4ti2} -- A software package for algebraic, geometric and 
              combinatorial problems in linear spaces,
    available at {\tt www.4ti2.de}.
  \bibitem{GeigerMeekSturmfels2006}
    D. Geiger, C. Meek and B. Sturmfels:
    On the toric algebra of graphical models,
    {\em Ann. Statist.} {\bf 34} (2006), 1463-1492. 
  \bibitem{GroetschelJuengerReinelt1985}
    M.~Gr{\"o}tschel, M.~J{\"u}nger and G.~Reinelt:
    Facets of the linear ordering polytope,
    {\em Math. Program.} {\bf 33} (1985), 43--60.
  \bibitem{Hochster1972}
    M.~Hochster:
    Rings of invariants, Cohen-Macaulay rings generated by monomials, and
    polytopes,
    {\em Ann. Math.} {\bf 96} (1972) 318--338.
  \bibitem{HommelBretzMaurer2007}
    G.~Hommel, F.~Bretz and W.~Maurer:
    Powerful short-cuts for multiple testing procedures with special
    reference to gatekeeping strategies,
    {\em Statist. Med.} {\bf 26} (2007) 4063-4073.
  \bibitem{Hunter2004} 
    D.R.~Hunter:
    MM algorithms for generalized Bradley-Terry models,
    {\em  Ann. Stat.} {\bf 32} (2004) 384-406.
  \bibitem{KatTho2007}
    A.~Katsabekis and A.~Thoma:
    Parametrizations of toric varieties over any field, 
    {\em J. Algebra} {\bf 308} (2007) 751--763. 
\bibitem{Katthaen2011}
   L.~Katth\"an:
   Decomposing sets of inversions,
   {\tt arXiv:1111.3419}.
  \bibitem{Luce1959} 
    R.D.~Luce: 
    {\em Individual Choice Behavior},
    Wiley, New York, 1959.
  \bibitem{Marden1995} 
    J.I.~Marden:
    {\em Analyzing and Modeling Rank Data}, 
    Monographs on Statistics and Applied Probability, {\bf 64},
    Chapman \& Hall, London, 1995
  \bibitem{OhsugiHibi1998} 
    H.~Ohsugi and T.~Hibi: 
    Normal polytopes arising from finite graphs,
    {\em J. Algebra} {\bf 207} (1998) 409--426.
  \bibitem{OhsugiHibi1999} 
    H.~Ohsugi and T.~Hibi: 
    Toric ideals generated by quadratic binomials, 
    {\em J. Algebra} {\bf 218} (1999) 509--527.
  \bibitem{PachterSturmfels2005} 
    L.~Pachter and B.~Sturmfels:
    {\em Algebraic Statistics for Computational Biology},
    Cambridge University Press, Cambridge, 2005.
   \bibitem{Plackett1968} 
    R.L. Plackett:
    Random permutations. 
    {\em J. R. Stat. Soc., Ser. B} {\bf 30} (1968) 517--534.
  \bibitem{ReinerSaliolaWelker2010}
    V.~Reiner, F.~Saliola and V.~Welker:
    Spectra of symmetrized shuffling operators,
    {\tt arXiv:1102.2460}.
  \bibitem{Sturmfels1996} 
    B.~Sturmfels:
    Gr{\"o}bner bases and convex polytopes,
    Univ. Lect. Ser. {\bf 8}, 
    AMS, Providence, 1996. 
  \bibitem{Sullivant2007}
    S.~Sullivant:
    Toric fiber products, 
    {\it J. Algebra} {\bf 316} 
    (2007) 560--577.
  \bibitem{Villerreal2001}
    R.H.~Villarreal:
    {\em Monomial Algebras},
    Pure and Appl. Math. {\bf 238},
    Marcel Dekker, New York, 2001.
  \bibitem{Zeilberger1999} 
    D.~Zeilberger:
    Proof of a conjecture of Chan, Robbins, and Yuen. In:
    Orthogonal polynomials: numerical and symbolic algorithms (Legan\'es, 1998),
    {\em Electron. Trans. Numer. Anal.}  {\bf 9}  (1999).
\end{thebibliography}
\end{document}